\newtheorem{assumption}{Assumption}
\newtheorem{lemma}{Lemma}
\newtheorem{remark}{Remark}
\newtheorem{theorem}{Theorem}
\let\mr=\mathrm
\begin{document}

\begin{frontmatter}

\title{Supercloseness of the NIPG method for a  singularly perturbed convection diffusion problem on Shishkin mesh in 2D\tnoteref{funding} }


\author[label1] {Xiaoqi Ma\fnref{cor2}}
\author[label1] { Jin Zhang\corref{cor1}}
\cortext[cor1] {Corresponding address: jinzhangalex@hotmail.com}
\fntext[cor2] {Email: xiaoqiMa@hotmail.com }
\address[label1]{School of Mathematics and Statistics, Shandong Normal University, Jinan 250014, China}

\begin{abstract}
As a popular stabilization technique, the nonsymmetric interior penalty Galerkin (NIPG) method has significant application value in computational fluid dynamics. 
 In this paper, we study the  NIPG method for a typical two-dimensional singularly perturbed convection diffusion problem on a Shishkin mesh. According to the characteristics of the solution, the mesh and numerical scheme, a new composite interpolation is introduced. In fact, this interpolation is composed of a vertices-edges-element interpolation within the layer and a local $L^{2}$-projection outside the layer. On the basis of that, by selecting penalty parameters on different types of interelement edges, we further obtain the supercloseness of almost $k+\frac{1}{2}$ order in an energy norm. Here $k$ is the degree of piecewise polynomials. Numerical tests support  our theoretical conclusion.
\end{abstract}

\begin{keyword}
Convection diffusion, Singular perturbation, NIPG method, Shishkin mesh, Supercloseness
\end{keyword}
\end{frontmatter}
\section{Introduction}
We consider a typical two-dimensional singularly perturbed problem:
\begin{equation}\label{eq:S-1}
\begin{aligned}
& Lu:=-\varepsilon \Delta u+\textbf{b}\cdot \nabla u+cu=f, \quad \text{in $ \Omega:= (0,1)\times (0,1)$},\\
& u=0,\quad\text{on $\Gamma=\partial\Omega$},
\end{aligned}
\end{equation}
where the perturbation parameter $\varepsilon$ satisfies $0<\varepsilon \ll 1$, $\textbf{b}$, $c$, $f$ are sufficiently smooth functions and satisfy the following conditions
\begin{equation}\label{eq:SPP-condition-1}
\begin{aligned}
&\textbf{b}(x, y)=(b_{1}(x, y), b_{2}(x, y))\ge (\beta_{1}, \beta_{2})>0,\quad c(x, y)\ge 0, \quad (x, y)\in \overline{\Omega},\\
&c_{0}^{2}(x, y):= \left(c-\frac{1}{2}\nabla\cdot \textbf{b}\right)(x, y)\ge \gamma_{0}>0,\quad (x, y)\in \overline{\Omega},\\
&f(0, 0)=f(1, 0)=f(0, 1)=f(1 ,1)=0,
\end{aligned}
\end{equation}
with some fixed constants $\beta_{1}$, $\beta_{2}$ and $\gamma_{0}$.  Because the parameter $\varepsilon$ can be arbitrarily small, the solution to \eqref{eq:S-1} usually exhibits boundary layers of width $\mathcal{O}(\varepsilon \ln (1/\varepsilon) )$ near $x= 1$ and $y= 1$. This is one of the classical behaviors of singular perturbation problems, namely that the solutions of these problems might change rapidly in a thin region.
However, most traditional numerical methods can not capture this rapid change. To overcome this difficulty, many strategies have been formulated. Among them, as a local refinement strategy, layer-adapted meshes are becoming more and more popular \cite{Lin1:2010-L, Mil1Rio2Shi3:1996-F, Roo1Sty2Tob3:2008-R}. There are two typical layer-adapted meshes in the literature---Bakhvalov meshes and Shishkin meshes (see \cite{Shi1:1992-D}). Due to the simple structure of Shishkin meshes, they have been widely used in the convergence theory of finite element methods; see \cite{Dur1Lom2Pri3:2013-S, Liu1Sty2Zha3:2018-S, Roo1Sty2Tob3:2008-R, Zha1Liu2:2017-S} and its references.

Discontinuous Galerkin finite element methods (DGFEMs) were developed in the early 1970s. 
So far, many variants have been introduced and studied; see \cite{Arn1Bre2Coc3:2002-U, Che1:2021-O, Coc1Don2Guz3:2008-O, Xie1Zhu2Zho3:2013-U, Zhu1Yan2Yin3:2015-H}. They allow the discontinuity of finite element functions on the element interface, and allow high-order schemes to be constructed in a natural way. 
As a variant of DGFEMs, the nonsymmetric interior penalty Galerkin (NIPG) method has its unique advantages.
For example, the NIPG method is increasingly favored by researchers  \cite{Roo1Zar2:2003-motified, Zar1Roo2:2005-I, Zhu1Yan2Yin3:2015-H},  because 
it is  coercive on any mesh.

In order to obtain more accurate numerical solutions, we focus on the supercloseness of the NIPG method. Here 'supercloseness' means that the numerical solution is closer to the interpolation of the exact solution rather than the exact solution itself. For general theory and development of supercloseness, readers can refer to \cite{Dur1Lom2Pri3:2013-S, Li1Whe2:2000-U, Liu1Sty2Zha3:2018-S, Sty1Tob2:2003-motified, Zha1Liu2:2017-S} and their references. In \cite{Roo1Zar2:2007-motified} and \cite{Zar1:2009-C}, the authors used the combination of the standard Galerkin finite element method and the NIPG method to study two different convection diffusion problems in 2D. By means of a nodal bilinear interpolating function in the layer and a local $L^{2}$-projection outside the layer, they obtained the supercloseness of almost $\frac{3}{2}$ order. However, this approach is neither pure nor high-order NIPG method. Compared with this low-order coupling method, on the one hand, the pure NIPG method will be simpler in the implementation of programming, and will be more flexible in the selection of meshes and polynomial degrees on the elements; on the other hand, the high-order NIPG method can achieve higher accuracy. To the best of our knowledge, there is no literature that studies the supercloseness of two-dimensional singularly perturbed convection diffusion problems by using high-order NIPG methods.

The purpose of this paper is to prove the supercloseness of a two-dimensional singularly perturbed problem by using a high-order NIPG method on Shishkin mesh.
In this process, we construct a new composite interpolation, which is composed of a local $L^{2}$-projection outside the layer and a vertices-edges-element interpolation inside the layer. For one thing, we can use the continuity of vertices-edges-element interpolation to eliminate the estimation of some jump terms; and use some convergence properties of this interpolation to improve the accuracy of diffusion term in the layer.  For another, according to the definition of the local $L^{2}$ projection, the convergence order of convective term outside the layer can be improved. 
Actually, these are the two main difficulties that we need to overcome in the analysis of supercloseness. 
 Based on this new interpolation, by defining penalty parameters at different interfaces, we obtain the supercloseness of almost $k+\frac{1}{2}$ order in an energy norm, which appears in the literature for the first time. Finally, a numerical experiment is presented to verify the main conclusion.

The following is the outline of this article. In Section 2, we present \emph{a prior} information of the solution and some basic concepts. Then in Section 3, a piecewise uniform mesh and the corresponding NIPG method are introduced. In Section 4, we design a new composite interpolation and derive some interpolation error estimates. Finally, we provide the main result of supercloseness, which can be verified by a numerical experiment.

In this paper, let $k\ge 1$ be a fixed integer and $C$ represent a generic positive constant which is independent of the mesh parameter $N$ and $\varepsilon$.

\section{\emph{A prior} information and notation}\label{sec:mesh,method}
\subsection{\emph{A prior} information}
Below, we will provide some prior information about the solution, which plays a significant role in the error analysis.
\begin{theorem}\label{eq: SS-1}
The exact solution $u$ of problem \eqref{eq:S-1} can be decomposed as $$u = S + E=S+ E_{1}+ E_{2} +E_{3},$$ where $S$ is the regular part, $E_{1}$ and $E_{2}$ are boundary layers along the sides $x = 1$ and $y = 1$ of $\Omega$ respectively, while $E_{3}$ is a corner layer at $(1, 1)$. Moreover, for all $(x, y)\in \overline{\Omega}$ and $0\le i+j\le k+1$, there is
\begin{equation}\label{eq:decomposition}
\begin{aligned}
&\left|\frac{\partial^{i+j}S}{\partial x^{i}\partial y^{j}}(x, y)\right|\le C,\\
&\left|\frac{\partial^{i+j}E_{1}}{\partial x^{i}\partial y^{j}}(x, y)\right|\le C\varepsilon^{-i}e^{-\beta_{1}(1-x)/\varepsilon},\\
&\left|\frac{\partial^{i+j}E_{2}}{\partial x^{i}\partial y^{j}}(x, y)\right|\le C\varepsilon^{-j}e^{-\beta_{2}(1-y)/\varepsilon},\\
&\left|\frac{\partial^{i+j}E_{3}}{\partial x^{i}\partial y^{j}}(x, y)\right|\le C\varepsilon^{-(i+j)}e^{-\beta_{1}(1-x)/\varepsilon}e^{-\beta_{2}(1-y)/\varepsilon}.
\end{aligned}
\end{equation}
\end{theorem}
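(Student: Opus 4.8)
The plan is to prove \eqref{eq:decomposition} by the classical asymptotic-expansion construction combined with a comparison principle, as developed for problems of this type in \cite{Shi1:1992-D, Mil1Rio2Shi3:1996-F, Roo1Sty2Tob3:2008-R}. First I would build the smooth part $S$. Since $\textbf{b}\ge(\beta_1,\beta_2)>0$, the reduced operator $L_0 w:=\textbf{b}\cdot\nabla w+cw$ is first order and its characteristics enter $\Omega$ through the inflow boundary $\Gamma_-:=\{x=0\}\cup\{y=0\}$. Define $S_0$ by $L_0 S_0=f$ with $S_0=0$ on $\Gamma_-$, and recursively $S_j$ by $L_0 S_j=\Delta S_{j-1}$ with $S_j=0$ on $\Gamma_-$ for $1\le j\le k+1$. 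Smoothness of $\textbf{b},c,f$ together with the compatibility $f(0,0)=0$ ensures $S_j\in C^{k+1}(\overline\Omega)$ with $|D^\alpha S_j|\le C$ uniformly in $\varepsilon$; setting $S:=\sum_{j=0}^{k+1}\varepsilon^j S_j$ gives $LS=f-\varepsilon^{k+2}\Delta S_{k+1}$, $S=0$ on $\Gamma_-$, and $|D^\alpha S|\le C$ for $0\le|\alpha|\le k+1$, which is the first estimate.

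Next come the layers. Along $x=1$ introduce the stretched variable $\xi=(1-x)/\varepsilon$ and seek $E_1=\sum_{j=0}^{k+1}\varepsilon^j E_{1,j}(\xi,y)$ cancelling the residual of $S$ on $\{x=1\}$. Each $E_{1,j}(\cdot,y)$ then solves a linear second-order ODE in $\xi$, with $y$ as a parameter, of the form $-\partial_{\xi\xi}E_{1,j}-b_1(1,y)\partial_\xi E_{1,j}=(\text{data built from }E_{1,0},\dots,E_{1,j-1})$, with prescribed value at $\xi=0$ and decay as $\xi\to\infty$; solving these explicitly (or via an integrating-factor bound) and using $b_1(1,y)\ge\beta_1$ yields $|\partial_\xi^m\partial_y^n E_{1,j}|\le Ce^{-\beta_1\xi}$, hence $|D^\alpha E_1|\le C\varepsilon^{-\alpha_1}e^{-\beta_1(1-x)/\varepsilon}$, the compatibility $f(1,0)=0$ being what makes the $y=0$ data admissible. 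The function $E_2$ along $y=1$ is constructed symmetrically, giving the third estimate. Finally $E_1$ leaves a residual on $\{y=1\}$ and $E_2$ one on $\{x=1\}$; the corner layer $E_3=\sum_{j=0}^{k+1}\varepsilon^j E_{3,j}(\xi,\eta)$ with $\eta=(1-y)/\varepsilon$ is defined to cancel both, each $E_{3,j}$ solving an elliptic problem in the quarter plane $\{\xi>0,\eta>0\}$ with exponentially decaying data; a maximum-principle argument against the barrier $Ce^{-\beta_1\xi}e^{-\beta_2\eta}$ and its differentiated analogues produces $|D^\alpha E_3|\le C\varepsilon^{-(\alpha_1+\alpha_2)}e^{-\beta_1(1-x)/\varepsilon}e^{-\beta_2(1-y)/\varepsilon}$.

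It remains to control the remainder $R:=u-S-E_1-E_2-E_3$, which satisfies $LR=g$ in $\Omega$ and $R=\rho$ on $\partial\Omega$, where by construction $\|g\|_{\infty}\le C\varepsilon^{k+1}$ and $\rho$ is exponentially small, so $\|\rho\|_{\infty}\le C\varepsilon^{k+1}$. Since $c\ge 0$, $L$ obeys a comparison principle; comparing $R$ with the barrier $C\varepsilon^{k+1}\bigl(1+x+e^{-\beta_1(1-x)/\varepsilon}+e^{-\beta_2(1-y)/\varepsilon}\bigr)$ bounds $R$ pointwise, and rescaling $x\mapsto\xi$ (respectively $y\mapsto\eta$) on the layer subregions and applying interior and boundary Schauder estimates upgrades this to bounds on $D^\alpha R$, $0\le|\alpha|\le k+1$, of the same form with one extra power of $\varepsilon$. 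Absorbing $R$ into $S$ away from the layers and into $E_1,E_2,E_3$ inside them yields \eqref{eq:decomposition}.

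The main obstacle is extracting the sharp negative powers of $\varepsilon$ in the derivative bounds, above all the $\varepsilon^{-(i+j)}$ rate for the corner layer $E_3$ and the matching control of $R$: this forces one to combine the stretched-variable rescalings with elliptic a priori estimates on unit-size subdomains while carrying the exponential weights through each differentiation, and to check that the corner compatibility conditions $f(0,0)=f(1,0)=f(0,1)=f(1,1)=0$ propagate through the recursions so that every expansion term lies in $C^{k+1}(\overline\Omega)$. Since such decompositions are by now standard under hypotheses \eqref{eq:S-1}--\eqref{eq:SPP-condition-1}, the cleanest write-up simply invokes the constructions of \cite{Shi1:1992-D, Mil1Rio2Shi3:1996-F, Roo1Sty2Tob3:2008-R} and adapts the bookkeeping to the exponent $k+1$.
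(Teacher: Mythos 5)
Your proposal reconstructs the standard Shishkin-type asymptotic expansion (reduced problem for $S$, stretched-variable ODEs for $E_1,E_2$, quarter-plane corner problem for $E_3$, comparison-principle control of the remainder), which is exactly the construction the paper delegates to \cite{Roo1Sty2Tob3:2008-R}: the paper's entire proof is a citation of that reference, supplemented by a remark that sufficient compatibility conditions on $f$ (per \cite{Lin1Sty2:2001-A, Sty1:2005-S}) are needed to make the decomposition rigorous. Your sketch is consistent with this, correctly identifies the role of the corner compatibility conditions, and ends by invoking the same references, so it is essentially the same approach, only written out in more detail.
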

\begin{proof}
This conclusion can be obtained by directly referring to \cite{Roo1Sty2Tob3:2008-R}.
\end{proof}
\begin{remark}
The arguments in \cite{Lin1Sty2:2001-A} make this assumption credible if we impose sufficient compatibility conditions on $f$, see \cite{Sty1:2005-S} for more details.

\end{remark}
Before introducing our methods, we first present some necessary notations.
\subsection{Notation}
For an open set $G\subset\Omega$, assume that $L^{2}(G)$ denotes the standard Lebesgue space with the inner product $(\cdot, \cdot)_{G}$ and the norm $\Vert\cdot\Vert_{L^{2}(G)}$. The standard Sobolev spaces are defined by $W^{k}_{p}(G)$ and $H^{k}(G)\equiv W^{k}_{2}(G)$ with the corresponding norm $\Vert\cdot\Vert_{W^{k}_{p}(D)}$ and seminorm $|\cdot|_{W^{k}_{p}}(G)$.

Suppose that $\mathcal{T}$ is a general partitioning of $\Omega$, which consists of disjoint open parallel rectangles $\kappa$ such that $\Omega=\bigcup\limits_{\kappa\in \mathcal{T}}\kappa$. In particular, we shall allow anisotropic (shape-irregular) mesh on a part of $\Omega$, but assume that there are no hanging nodes. 

For each element $\kappa\in\mathcal{T}$, set a nonnegative integer  $s_{\kappa}$ and define a broken Sobolev space of composite order $\textbf{s} = \{s_{\kappa} : \kappa\in\mathcal{T}\}$ with
$$H^{\textbf{s}}(\Omega, \mathcal{T}) = \{ w\in L^{2}(\Omega) : w|_{\kappa}\in H^{s_{\kappa}} (\kappa), \forall\kappa\in \mathcal{T}\}.$$
Then define the corresponding norm and seminorm by
$$\Vert w\Vert_{\textbf{s}, \mathcal{T}} = \left(\sum_{\kappa\in\mathcal{T}} \Vert w\Vert^{2}_{H^{s_{\kappa}}(\kappa)}\right)^{\frac{1}{2}}\quad\text{and}\quad |w|_{\textbf{s}, \mathcal{T}} = \left(\sum_{\kappa\in \mathcal{T}} |w|^{2}_{H^{s_{\kappa}}(\kappa)}\right)^{\frac{1}{2}},$$
respectively. When $s_{\kappa} = s$ for all $\kappa\in \mathcal{T}$, we write $H^{s}(\Omega, \mathcal{T})$, $\Vert w\Vert_{s, \mathcal{T}}$, and $|w|_{s,\mathcal{T}}$. In addition, the broken gradient $\nabla_{\mathcal{T}} w$ of a function $w \in H^{1}(\Omega, \mathcal{T})$ can be denoted as $(\nabla_{\mathcal{T}} w)|_{\kappa} = \nabla(w|_{\kappa}), \kappa\in \mathcal{T}$.

For any element $\kappa$, let $\partial\kappa$ denote the union of all open edges of $\kappa$. Then we use $\textbf{n}_{\kappa}(x, y)$, or simply $\textbf{n}_{\kappa}$ to represent the unit outward normal vector at the point $(x, y)\in\partial\kappa$. On the basis of that, the inflow and outflow parts of $\partial\kappa$ can be defined as
\begin{equation*}
\begin{aligned}
&\partial_{-}\kappa = \{(x, y)\in \partial\kappa : \textbf{b}(x, y)\cdot \textbf{n}_{\kappa}(x, y)<0\},\\
&\partial_{+}\kappa = \{(x, y)\in \partial\kappa : \textbf{b}(x, y)\cdot \textbf{n}_{\kappa}(x, y)\ge 0\},
\end{aligned}
\end{equation*}
respectively.

Let $\mathcal{E}$ be the set of all one-dimensional open edges of $\mathcal{T}$, and $\mathcal{E}_{int}\subset \mathcal{E}$ be the set of all edges $e \in \mathcal{E}$ contained in $\Omega$. 
Then for each $e\in \mathcal{E}_{int}$, there exist indices $i$ and $j$ such that $i > j$, and $\kappa := \kappa_{i}$ and $\kappa := \kappa_{j}$ share the interface $e$. The (element-numbering-dependent) jump and the mean value of a function $v\in H^{1}(\Omega, \mathcal{T})$ across $e$ are defined as
$$[v]_{e} = v|_{\partial\kappa\cap e} - v|_{\partial\kappa'\cap e},\quad \langle v\rangle_{e} = \frac{1}{2}(v|_{\partial\kappa\cap e} + v|_{\partial\kappa'\cap e}),$$
respectively. Note that when $e \subset \Gamma$, we shall set $[v]_{e} =\langle v\rangle_{e} = v$. Then for $e\in \mathcal{E}_{int}$, we define the (element-numbering-dependent) unit normal vector $\nu = \textbf{n}_{\kappa} = -\textbf{n}_{\kappa'}$ pointing from $\kappa$ to $\kappa'$; furthermore, for convenience, if $e \subset \partial\kappa\cap\Gamma$, we also use $\nu$ to denote the unit outward normal vector $\textbf{n}_{\kappa}$ on $\Gamma$. 

For any element $\kappa\in \mathcal{T}$ and $v\in H^{1}(\kappa)$, define the inner trace of $v|_{\kappa}$ on $\partial\kappa$ by $v^{+}_{\kappa}$. When $\partial_{-}\kappa/ \Gamma\neq \varnothing$, for some $\kappa\in \mathcal{T}$, for each $(x, y)\in \partial_{-}\kappa/ \Gamma$ there exists a unique $\kappa'\in \mathcal{T}$ such that $(x, y) \in \partial_{+}\kappa'$. Now for a function $v\in H^{1}(\Omega, \mathcal{T})$ and some $\kappa \in \mathcal{T}$ satisfying $\partial_{-}\kappa/ \Gamma\neq \varnothing$, the outer trace $v_{\kappa}^{-}$ of $v$ on $\partial_{-}\kappa/ \Gamma$ relative to $\kappa$ can be defined as the inner trace $v^{+}_{\kappa'}$ relative to the element $\kappa'$ such that $\partial_{+}\kappa'\cap (\partial_{-}\kappa/\Gamma)\neq \varnothing$. The jump of $v$ across $\partial_{-}\kappa/\Gamma$ is denoted as
$$\left\lfloor v\right\rfloor_{\kappa} = v_{\kappa}^{+} - v_{\kappa}^{-}.$$
In particular, set $\lfloor v\rfloor|_{\Gamma}=v^{+}$.
To simplify the notation, we will omit indices in the terms $[v]_{e}$, $\langle v\rangle_{e}$, and $\lfloor v\rfloor_{\kappa}$. 
\section{Shishkin mesh and NIPG method}
\subsection{Shishkin mesh}
Based on the information of layers presented in Theorem \ref{eq: SS-1}, we use a piecewise uniform mesh---Shishkin mesh. 
First, the set of mesh points  $\Omega_{N}=\{(x_{i}, y_{j})\in \overline
{\Omega} : i, j=0, 1, \cdots, N\}$ is constructed. Set $h_{i,x} := x_{i+1} - x_{i}$ and $h_{j,y} := y_{j+1} - y_{j}$ for all $0\le i, j\le N-1$. And a mesh rectangle is often written as $\kappa_{ij} = (x_{i}, x_{i+1}) \times (y_{j}, y_{j+1})$ for a specific element and as $\kappa$ for a generic mesh rectangle.

In order to distinguish the layer part from the smooth part, we introduce the mesh transition parameters $\lambda_{x}$ and $\lambda_{y}$,
\begin{equation*}
\lambda_{x}=\min\left\{\frac{1}{2}, \frac{\sigma_{x}\varepsilon}{\beta_{1}}\ln N\right\},\quad \lambda_{y}=\min\left\{\frac{1}{2}, \frac{\sigma_{y}\varepsilon}{\beta_{2}}\ln N\right\},
\end{equation*} 
where $\beta_{1}$ and $\beta_{2}$ are the lower bounds from \eqref{eq:SPP-condition-1} for the convective part $\textbf{b}$. For convenience, we take $\sigma_{x}= \sigma_{y}= \sigma$ and set $\sigma\ge k+\frac{3}{2}$. Let $N\in \mathbb{N}$, $N\ge 8$, be divisible by $2$ in both the $x$-- and $y$--directions. 
Then divide the subintervals $[0, 1-\lambda_{x}]$ and $[1-\lambda_{x}, 1]$ into $N/2$ uniform mesh intervals respectively, and the division of $y$--direction is similar. 
Therefore, Shishkin mesh points can be denoted as
\begin{equation*}
x_{i}=
\left\{
\begin{aligned}
&\frac{2(1-\lambda_{x})}{N}i\quad &&\text{for $i=0,1,...,N/2$},\\
& 1-\lambda_{x}+\frac{2\lambda_{x}}{N}(i-\frac{N}{2})&&\text{for $i=N/2+1,...,N$}
\end{aligned}
\right.
\end{equation*}
and
\begin{equation*}
y_{j}=
\left\{
\begin{aligned}
&\frac{2(1-\lambda_{y})}{N}j\quad &&\text{for $j=0,1,...,N/2$},\\
&1- \lambda_{y}+\frac{2\lambda_{y}}{N}(j-\frac{N}{2})&&\text{for $j=N/2+1,...,N$}.
\end{aligned}
\right.
\end{equation*}

Moreover, divide the domain $\Omega$ as $\Omega=\Omega_{11}\cup \Omega_{12}\cup \Omega_{21}\cap \Omega_{22}$, where
\begin{equation*}
\begin{aligned} 
&\Omega_{11}:= (0, 1-\lambda_{x}) \times (0, 1-\lambda_{y}),\quad\quad \Omega_{12} :=(1-\lambda_{x}, 1) \times (0, 1-\lambda_{y}),\\
& \Omega_{21} :=(0, 1-\lambda_{x}) \times (1-\lambda_{y}, 1),\quad\quad\Omega_{22}: =(1-\lambda_{x}, 1) \times (1-\lambda_{y}, 1)
\end{aligned}
\end{equation*}
and we can easily see that $1-\lambda_{x}=x_{N/2}$ and $1-\lambda_{y}=y_{N/2}$.
\vspace{-0.4cm}
\begin{figure}[H]
\begin{center}
\includegraphics[width=0.6\textwidth]{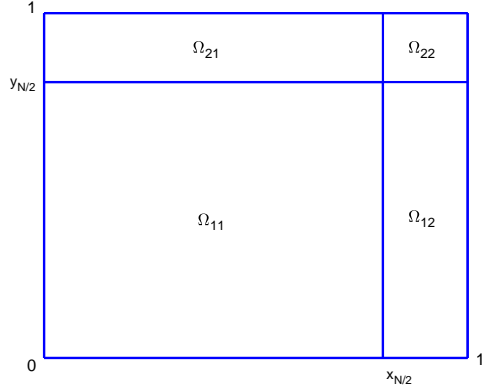}
\vspace{-0.6cm}
\caption{This is partitioning of the domain $\Omega$}
\end{center}
\end{figure}
\vspace{-0.5cm}
In addition, it can be clearly shown that
\begin{equation*}
\begin{aligned}
&\max\{h_{i, x}, h_{j, y}\}\le C\varepsilon N^{-1}\ln N,\quad i, j=N/2, \cdots, N-1,\\
&\max\{h_{i, x}, h_{j, y}\}\le CN^{-1}\quad i, j= 0, 1, \cdots, N/2-1,
\end{aligned}
\end{equation*}
which will be often applied in the error analysis.

\begin{assumption}\label{ass:S-1}
Throughout this paper, assume  that
\begin{equation*}
\varepsilon \le C N^{-1}
\end{equation*}
and in practice, it is not a restriction.
\end{assumption}

\subsection{The NIPG method}
On the above Shishkin mesh, define the finite element space by
$$V_{N}^{k} :=\{v\in L^{2}(\Omega) : v|_{\Gamma} =0, v|_{\kappa} \in \mathcal{Q}_{k}(\kappa)\quad \forall \kappa\in \mathcal{T}\},$$
where $\mathcal{Q}_{k} = span\{x^{i}y^{j} : 1\le i ,j\le k\}$. 
The function $v\in V_{N}^{k}$ is piecewise continuous on each element $\kappa$, that is to say, it allows discontinuity along interelement edges.

Now the weak formulation of \eqref{eq:S-1} is that: Find $u_{h} \in V_{N}^{k}$ such that
\begin{equation}\label{eq:SD}
B(u_{h},v_{h})=L(v_{h}) \quad \forall v_{h} \in V_{N}^{k},
\end{equation}
where $B(u,v)=B_{1}(u,v)+B_{2}(u,v)+B_{3}(u,v)$ and 
\begin{equation*}
\begin{aligned}
B_{1}(u,v)&= \sum_{\kappa\in\mathcal{T}}\varepsilon\int_{\kappa}\nabla u\cdot\nabla v\mr{d}x\mr{d}y-\varepsilon\sum_{e\in\mathcal{E}}\int_{e}\langle\nabla u\cdot\nu\rangle[v]\mr{d}s+\varepsilon\sum_{e\in\mathcal{E}}\int_{e}[u]\langle\nabla v\cdot\nu\rangle\mr{d}s+\sum_{e\in\mathcal{E}}\int_{e}\rho_{e}[u][v]\mr{d}s,
\end{aligned}
\end{equation*}
\begin{equation*}
\begin{aligned}
&B_{2}(u,v)=\sum_{\kappa\in\mathcal{T}}\left(\int_{\kappa}(\textbf{b}\cdot \nabla u)v\mr{d}x\mr{d}y-\int_{\partial_{-}\kappa\cap \Gamma}(\textbf{b}\cdot \textbf{n}_{\kappa})u^{+}v^{+}\mr{d}s-\int_{\partial_{-}\kappa/ \Gamma}(\textbf{b}\cdot \textbf{n}_{\kappa})\lfloor u\rfloor v^{+}\mr{d}s\right),\\
&B_{3}(u,v)=\sum_{\kappa\in\mathcal{T}}\int_{\kappa}cuv\mr{d}x\mr{d}y,\\
&L(v)=\sum_{\kappa\in\mathcal{T}}\int_{\kappa}f v\mr{d}x\mr{d}y,
\end{aligned}
\end{equation*}
where the nonnegative constant $\rho_{e}\ge 0$ for $e \in\mathcal{E}$ are called penalization parameters.
 In the sequel, we shall provide the exact choices of $\rho_{e}$. Before that, let's divide all edges $e \in\mathcal{E}$ into the following types:

The edges of type $\mathcal{M}_{1}$ belong to the set $\Omega^{*}:=[0, 1-\lambda_{x})\times [0, 1-\lambda_{y})$.
They are part of the smooth region and their length is either $h_{i, x}$ or $h_{j, y}$ for $i, j=1, 2, \cdots, N/2-1$. According to the definition of $\lambda_{x}$ and $\lambda_{y}$, one obtains that $h_{i, x}$ or $h_{j, y}$ for $i, j=1, 2, \cdots, N/2-1$ is much larger than $h_{i, x}$ or $h_{j, y}$ for $i, j=N/2, \cdots, N-1$. Therefore we describe these edges as long; the other edges are called short.
Edges of type $\mathcal{M}_{2}$ are also long and belong to the boundary layer region $$(1-\lambda_{x}, 1]\times [0, 1-\lambda_{y})\cup [0, 1-\lambda_{x})\times(1-\lambda_{y}, 1].$$
Since this region also contains short edges, we classify them into the third type. In particular, type $\mathcal{M}_{3}$ contains all short edges in the corner layer $[1-\lambda_{x}, 1]\times [1-\lambda_{y}, 1]$ at the same time. Finally, edges of type $\mathcal{M}_{4}$ are long and lie in $\{1-\lambda_{x}\}\times [0, 1-\lambda_{y}]\cup [0, 1-\lambda_{x}]\times\{1-\lambda_{y}\}$. On that basis, we present the exact values of $\rho_{e}$,
\begin{equation}\label{eq: penalization parameters}
\rho_{e}=\left\{
\begin{aligned}
&1,\quad \text{if e is of type $\mathcal{M}_{1}$},\\
&N^{2},\quad \text{if e is of type $\mathcal{M}_{2}$},\\
&N,\quad \text{if e is of type $\mathcal{M}_{3}$ or $\mathcal{M}_{4}$}.
\end{aligned}
\right.
\end{equation}

\begin{lemma}\label{Galerkin orthogonality property}
Assume that the solution of problem \eqref{eq:S-1} $u \in H^{2}(\Omega, \mathcal{T})$ and $\nabla u\cdot \nu$ is continuous across each interior edge $e$. Then the bilinear form $B(\cdot,\cdot)$ defined in \eqref{eq:SD} has the following Galerkin orthogonality property
\begin{equation*}
B(u-u_{h}, v)=0,\quad \forall v\in V_{N}^{k}.
\end{equation*}
\end{lemma}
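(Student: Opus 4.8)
The plan is to verify the Galerkin orthogonality by showing that the exact solution $u$ satisfies the same discrete variational equation \eqref{eq:SD} that $u_h$ does, i.e. $B(u,v)=L(v)$ for all $v\in V_N^k$; subtracting $B(u_h,v)=L(v)$ then gives the result by bilinearity. So the task reduces to a consistency check: starting from the strong form $-\varepsilon\Delta u+\mathbf{b}\cdot\nabla u+cu=f$, multiply by a test function $v\in V_N^k$, integrate over each element $\kappa$, and sum.

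First I would treat the diffusion contribution. Integrating $-\varepsilon\int_\kappa \Delta u\, v$ by parts on each element yields $\varepsilon\int_\kappa \nabla u\cdot\nabla v - \varepsilon\int_{\partial\kappa}(\nabla u\cdot\mathbf{n}_\kappa)v$. Summing over $\kappa$ and regrouping the boundary integrals edge by edge, each interior edge $e$ is shared by two elements; using the hypothesis that $\nabla u\cdot\nu$ is continuous across $e$ (so $\langle\nabla u\cdot\nu\rangle=\nabla u\cdot\nu$ and $[\nabla u\cdot\nu]=0$), the edge sum of $-\varepsilon(\nabla u\cdot\mathbf{n}_\kappa)v$ collapses to $-\varepsilon\sum_{e}\int_e\langle\nabla u\cdot\nu\rangle[v]$ (with the usual convention that on $\Gamma$ the jump is the trace). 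Since $u\in H^2(\Omega,\mathcal{T})$ is in fact globally continuous (the true solution is smooth), $[u]_e=0$ on every interior edge and $u=0$ on $\Gamma$, so the added terms $\varepsilon\sum_e\int_e[u]\langle\nabla v\cdot\nu\rangle$ and $\sum_e\int_e\rho_e[u][v]$ both vanish. Hence $B_1(u,v)=-\varepsilon\int_\Omega \Delta u\,v$.

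Next the convection term: $B_2(u,v)$ as written already equals $\sum_\kappa\big(\int_\kappa(\mathbf{b}\cdot\nabla u)v - \int_{\partial_-\kappa\cap\Gamma}(\mathbf{b}\cdot\mathbf{n}_\kappa)u^+v^+ - \int_{\partial_-\kappa\setminus\Gamma}(\mathbf{b}\cdot\mathbf{n}_\kappa)\lfloor u\rfloor v^+\big)$; since $u$ is continuous across interelement edges the jump $\lfloor u\rfloor$ vanishes on $\partial_-\kappa\setminus\Gamma$, and $u=0$ on $\Gamma$ kills the other boundary term, leaving $B_2(u,v)=\sum_\kappa\int_\kappa(\mathbf{b}\cdot\nabla u)v=\int_\Omega(\mathbf{b}\cdot\nabla u)v$. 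Trivially $B_3(u,v)=\int_\Omega cuv$. Adding the three pieces gives $B(u,v)=\int_\Omega(-\varepsilon\Delta u+\mathbf{b}\cdot\nabla u+cu)v=\int_\Omega f v=L(v)$, and subtracting $B(u_h,v)=L(v)$ finishes the proof.

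There is no real obstacle here — the statement is a routine consistency lemma — but the one point that deserves care is the edge-by-edge bookkeeping of the diffusion boundary terms: one must check that the element-numbering conventions for $\nu$, $[\cdot]$, and $\langle\cdot\rangle$ fixed in Section 2 make the signs line up so that $\sum_\kappa -\varepsilon\int_{\partial\kappa}(\nabla u\cdot\mathbf{n}_\kappa)v$ really reduces to $-\varepsilon\sum_e\int_e\langle\nabla u\cdot\nu\rangle[v]$ once continuity of $\nabla u\cdot\nu$ is used, and similarly that the boundary edges in $\Gamma$ are handled by the stated convention $[v]=\langle v\rangle=v$. Everything else follows from the regularity assumption $u\in H^2(\Omega,\mathcal{T})$ together with the (implicit) global continuity of the exact solution, which makes all jump-type terms drop out.
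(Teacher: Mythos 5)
Your proof is correct: the element-wise integration by parts of the diffusion term, the collapse of the edge sums under the continuity of $u$ and of $\nabla u\cdot\nu$, and the resulting consistency identity $B(u,v)=L(v)$ followed by subtraction of $B(u_h,v)=L(v)$ is exactly the standard argument this lemma rests on. The paper in fact states the lemma without any proof, so your write-up supplies the omitted (and canonical) details; the only point worth flagging is that global continuity of $u$ does not follow from $u\in H^{2}(\Omega,\mathcal{T})$ alone but from $u$ being the exact solution, which you correctly note as an implicit assumption.
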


According to $B(\cdot,\cdot)$, define the energy norm as
\begin{equation}\label{eq:SS-1}
\begin{aligned}
\Vert v \Vert_{NIPG}^{2}&:=
\sum_{\kappa\in\mathcal{T}}\left(\varepsilon\Vert\nabla v\Vert_{L^{2}(\kappa)}^{2}+\Vert c_{0} v\Vert^{2}_{L^{2}(\kappa)}\right)+\sum_{e\in\mathcal{E}}\int_{e}\rho_{e}[v]^{2}\mr{d}s\\
&+\frac{1}{2}\sum_{\kappa\in\mathcal{T}}\left(\Vert v^{+}\Vert^{2}_{\partial_{-}\kappa\cap\Gamma}+ \Vert v^{+}-v^{-}\Vert^{2}_{\partial_{-}\kappa/\Gamma}+\Vert v^{+}\Vert^{2}_{\partial_{+}\kappa\cap\Gamma}\right) \quad \forall v\in V_{N}^{k}
\end{aligned}
\end{equation}
with the notation
$$(v, w)_{\tau} = \int_{ \kappa} |\textbf{b}\cdot \textbf{n}_{\kappa} |vw\mr{d}s,\quad \tau\subset \partial\kappa,$$
and $\Vert v\Vert^{2}_{\tau} = (v, v)_{\tau}$. 
Then from \eqref{eq:SPP-condition-1} and some direct calculations, one has
\begin{equation}\label{eq:coercity}
B(v_h, v_h) \ge \Vert v_h \Vert_{NIPG}^2\quad \text{for all $v_h\in V_{N}^{k}$},
\end{equation}
which follows that $u_{h}$ is well defined by \eqref{eq:SD}.
%
%
%

\section{Interpolation and interpolation error}
\subsection{Interpolation}
In this section, we design a new interpolation $\Pi u$ of the exact solution $u$, that is 
\begin{equation}\label{eq:H-1}
(\Pi u)|_{\kappa}=\left\{
\begin{aligned}
& (I_{N} u)|_{\kappa}, \quad \text{if $\kappa\subset \overline{\Omega/\Omega_{11}}$},\\
& (P_{h}u)|_{\kappa},\quad \text{if $\kappa\subset \overline{\Omega_{11}}$},
\end{aligned}
\right.
\end{equation}
where $P_{h}u$ is a local $L^{2}$-projection of $u$ in the finite element space, and $I_{N}u$ represents a vertices-edges-element interpolation. Below, we will provide the definitions and corresponding properties of the both interpolations. 

On the one hand, we first define finite elements on the reference cell $\hat{K} = (-1, +1)^{2}$. Let $\hat{a}_{i}$ and $\hat{e}_{i}$ for $i = 1, 2, 3, 4$, denote the vertices and the edges of $\hat{K}$, respectively.  The operator $\hat{I} : C(\overline{\hat{K}})\rightarrow \mathcal{Q}_{k}(\hat{K})$ is defined by requiring \cite{Sty1Tob2:2008-U}
\begin{equation*}
\begin{split}
&(\hat{I} \hat{w})(\hat{a}_{i}) = \hat{w}(\hat{a}_{i}),\quad i = 1, 2, 3, 4,\\ 	
& \int_{\hat{e}_{i}}(\hat{I}\hat{w})q\mr{d}s=\int_{\hat{e}_{i}}\hat{w}q\mr{d}s,\quad i = 1, 2, 3, 4,\quad \forall q\in P_{k-2}(\hat{e}_{i}),\\
&\int_{\hat{K}}(\hat{I}\hat{w})q\mr{d}\xi\mr{d}\eta=\int_{\hat{K}}\hat{w}q\mr{d}\xi\mr{d}\eta,\quad \forall q \in \mathcal{Q}_{k-2}(\hat{K}).
\end{split}
\end{equation*}
Here $P_{k-2}(\hat{e}_{i})$ is the space of polynomials of degree at most $k-2$ in the single variable whose axis is parallel to the edge $\hat{e}_{i}$.

Using the affine transformation which maps from $\hat{K}$ to an arbitrary $\kappa\in \mathcal{T}$, one derives the corresponding approximation operator $I_{K} : C(\overline{K})\rightarrow \mathcal{Q}_{k}(K)$. Then define a continuous global interpolation operator $I_{N} :C(\bar{\Omega}) \rightarrow V_{N}^{k}$ by setting
$$(I_{N} v)|_{\kappa} := I_{K}(v|_{\kappa})\quad \forall \kappa\in \mathcal{T}.$$
In the following we present the corresponding theories and properties of this interpolation according to some arguments in \cite{Sty1Tob2:2008-U}.
\begin{lemma}\label{RR-1}
 Let $\kappa\in \mathcal{T}$. Suppose that $w \in H^{k+2}(\kappa)$ and $I_{N}w \in \mathcal{Q}_{k}(\kappa)$ is its vertices-edges-element interpolation. Then for $v\in \mathcal{Q}_{k}(\kappa)$, 
\begin{equation*}
\begin{split}
&\vert\int_{\kappa}(I_{N}w-w)_{x}v_{x}\mr{d}x\mr{d}y\vert\le Ch_{j, y}^{k+1}\left\Vert \frac{\partial^{k+2}w}{\partial x\partial y^{k+1}}\right\Vert_{L^{2}( \kappa)}\Vert v_{x}\Vert_{L^{2}(\kappa)},\\
&\vert\int_{\kappa}(I_{N}w-w)_{y}v_{y}\mr{d}x\mr{d}y\vert\le Ch_{i, x}^{k+1}\left\Vert \frac{\partial^{k+2}w}{\partial x^{k+1}\partial y}\right\Vert_{L^{2}(\kappa)}\Vert v_{y}\Vert_{L^{2}(\kappa)}
\end{split}
\end{equation*}
\end{lemma}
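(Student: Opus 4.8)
\textbf{Proof proposal for Lemma \ref{RR-1}.}

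The plan is to work on the reference cell $\hat{K}=(-1,+1)^2$ and then scale back. First I would transform both integrals to $\hat{K}$ via the affine map $\kappa\to\hat{K}$; for an axis-parallel rectangle of side lengths $h_{i,x}$ and $h_{j,y}$, the map separates variables, so $\partial_x$ picks up a factor $h_{i,x}^{-1}$ and $\partial_y$ a factor $h_{j,y}^{-1}$, while the area element contributes $h_{i,x}h_{j,y}/4$. Thus $\int_\kappa (I_N w - w)_x v_x\,\mr{d}x\mr{d}y$ becomes $\tfrac{h_{j,y}}{h_{i,x}}$ times the analogous integral over $\hat{K}$ in the transformed functions $\hat w,\hat v$, where $\hat w=w\circ F$ and $\hat I\hat w=\widehat{I_N w}$ because the vertices-edges-element interpolation commutes with the affine map (this is exactly how $I_K$ was defined from $\hat I$).

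The heart of the matter is the reference-cell estimate: for $\hat v\in\mathcal{Q}_k(\hat K)$,
\begin{equation*}
\left|\int_{\hat K}(\hat I\hat w-\hat w)_\xi \hat v_\xi\,\mr{d}\xi\mr{d}\eta\right|\le C\left\|\frac{\partial^{k+2}\hat w}{\partial\xi\,\partial\eta^{k+1}}\right\|_{L^2(\hat K)}\|\hat v_\xi\|_{L^2(\hat K)}.
\end{equation*}
To prove this I would exploit the defining conditions of $\hat I$. Note $\hat v_\xi\in\mathcal{Q}_{k-1}(\hat K)\subset\mathcal{Q}_k(\hat K)$, but more usefully one can integrate by parts in $\xi$: $\int_{\hat K}(\hat I\hat w-\hat w)_\xi\hat v_\xi = \int_{\partial\hat K}(\hat I\hat w-\hat w)\hat v_\xi\,\nu_\xi\,\mr{d}s - \int_{\hat K}(\hat I\hat w-\hat w)\hat v_{\xi\xi}$. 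On the two edges $\xi=\pm1$ the boundary term involves $(\hat I\hat w-\hat w)\hat v_\xi$ with $\hat v_\xi|_{\xi=\pm1}\in P_{k-1}$ in $\eta$, which is one degree too high for the edge condition $\int_{\hat e}(\hat I\hat w-\hat w)q=0,\ q\in P_{k-2}$; on $\eta=\pm1$ we have $\nu_\xi=0$. So a direct orthogonality cancellation is not immediate. The clean route is a Bramble–Hilbert / bounded-linear-functional argument: the bilinear functional $G(\hat w,\hat v):=\int_{\hat K}(\hat I\hat w-\hat w)_\xi\hat v_\xi$ is bounded on $H^{k+2}(\hat K)\times\mathcal{Q}_k(\hat K)$ (using $\hat I$'s $H^{k+2}\to\mathcal{Q}_k$ stability and a trace/inverse inequality on the finite-dimensional space $\mathcal{Q}_k$), and it vanishes whenever $\hat w$ is such that $\partial_\xi\partial_\eta^{k+1}\hat w\equiv 0$, i.e. whenever $\hat w\in \mathcal{Q}_k(\hat K)\oplus(\text{polynomials independent of }\xi)$: indeed if $\partial_\xi\hat w$ is a polynomial of degree $\le k$ in $\eta$ (any degree in $\xi$)... one checks $\hat I$ reproduces enough so that $(\hat I\hat w-\hat w)_\xi$ is $L^2(\hat K)$-orthogonal to $\hat v_\xi$. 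I would make this precise by decomposing $\hat w$ along $\eta$-degree: writing $\hat w=\sum_{m\ge 0}c_m(\xi)\,L_m(\eta)$ in Legendre polynomials, the element condition kills the interpolation error against $\mathcal{Q}_{k-2}$ and the edge conditions at $\eta=\pm1$ handle the two top $\eta$-modes, leaving the error controlled by the modes $m\ge k+1$, whose size is dominated by $\|\partial_\xi\partial_\eta^{k+1}\hat w\|_{L^2(\hat K)}$. Then Bramble–Hilbert gives the stated bound with the single mixed derivative on the right.

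Finally I would scale back: the reference bound has $\|\partial_\xi\partial_\eta^{k+1}\hat w\|_{L^2(\hat K)} = C\,h_{i,x}^{1/2-1}h_{j,y}^{(k+1)-1/2}\,\|\partial_x\partial_y^{k+1}w\|_{L^2(\kappa)}$ and $\|\hat v_\xi\|_{L^2(\hat K)}=C\,h_{i,x}^{1/2}h_{j,y}^{-1/2}\,\|v_x\|_{L^2(\kappa)}$ (using $\mr{d}\xi\mr{d}\eta = (4/(h_{i,x}h_{j,y}))\mr{d}x\mr{d}y$ and the derivative Jacobians); multiplying by the prefactor $h_{j,y}/h_{i,x}$ collects to exactly $h_{j,y}^{k+1}$, yielding the first inequality. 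The second follows by the symmetric argument with the roles of $\xi,\eta$ and of $h_{i,x},h_{j,y}$ interchanged. The main obstacle is the reference-cell step — specifically, verifying that $G(\hat w,\hat v)$ annihilates the right polynomial subspace so that Bramble–Hilbert produces the \emph{single} mixed derivative $\partial_\xi\partial_\eta^{k+1}\hat w$ (rather than the full $H^{k+2}$ seminorm); this is where the precise edge-and-element conditions defining $\hat I$, and the fact that $\hat v_\xi$ is tested rather than a general $\mathcal{Q}_k$ function, must be used carefully, essentially following the anisotropic interpolation analysis of \cite{Sty1Tob2:2008-U}.
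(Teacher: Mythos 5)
The paper offers no argument of its own here---it simply cites \cite{Sty1Tob2:2008-U} (Lemma 4)---and your proposal is precisely a reconstruction of that cited lemma's proof: affine scaling to $\hat{K}$, a reference-cell bound extracted from the edge and element moment conditions defining $\hat{I}$ via a Bramble--Hilbert argument, then scaling back; so it takes essentially the same approach and the overall structure is sound. One small slip in the scaling step: since a $\xi$-derivative of $\hat{w}=w\circ F$ contributes a factor $h_{i,x}/2$ and the change of measure contributes $(h_{i,x}h_{j,y})^{-1/2}$, the first norm scales as $h_{i,x}^{1-1/2}h_{j,y}^{(k+1)-1/2}$ rather than $h_{i,x}^{1/2-1}h_{j,y}^{(k+1)-1/2}$; with the corrected exponent the powers indeed collect to $h_{j,y}^{k+1}$ as you claim.
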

\begin{proof}
This lemma can be obtained directly according to \citep[Lemma 4]{Sty1Tob2:2008-U}.
\end{proof}
\begin{lemma} \label{eq:interpolation-theory}
 Let $\kappa\in \mathcal{T}$ and there exists a constant $C$ such that the vertices-edges-element approximant $I_{N}v$ satisfies
\begin{equation*}
\begin{split}
&\Vert (v- I_{N}v)_{x}\Vert_{L^{2}(\kappa)}\le C\sum_{l+m=k}h_{i, x}^{l}h_{j, y}^{m}\left\Vert\frac{\partial^{k+1}v}{\partial x^{l+1}\partial y^{m}}\right\Vert_{L^{2}(\kappa)},\\
&\Vert (v- I_{N}v)_{y}\Vert_{L^{2}(\kappa)}\le C\sum_{l+m=k}h_{i, x}^{l}h_{j, y}^{m}\left\Vert\frac{\partial^{k+1}v}{\partial x^{l}\partial y^{m+1}}\right\Vert_{L^{2}(\kappa)}
\end{split}
\end{equation*}
for all $v\in H^{k+1}(\kappa)$.
\end{lemma}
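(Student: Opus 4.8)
The plan is to establish Lemma~\ref{eq:interpolation-theory} by the classical scaling (Bramble--Hilbert) argument on the reference cell, taking care that the cell is anisotropic so that the $x$- and $y$-directions must be scaled separately. First I would recall that the operator $I_N$ is defined elementwise through the affine map $F_\kappa:\hat K\to\kappa$ with $\hat K=(-1,1)^2$; since $\kappa=(x_i,x_{i+1})\times(y_j,y_{j+1})$, this map is diagonal, $x=x_i+\tfrac{h_{i,x}}{2}(\hat\xi+1)$, $y=y_j+\tfrac{h_{j,y}}{2}(\hat\eta+1)$, and it commutes with $I_N$ in the sense that $\widehat{I_N v}=\hat I\hat v$ where $\hat v=v\circ F_\kappa$. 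The reference operator $\hat I:C(\overline{\hat K})\to\mathcal Q_k(\hat K)$ reproduces $\mathcal Q_k(\hat K)$, and in particular it reproduces all polynomials of total degree $\le k$; this is the crucial exactness property that feeds the Bramble--Hilbert lemma.

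The key steps, in order, are: (i) Transform the left-hand side to the reference cell. Using the chain rule, $\partial_x = \tfrac{2}{h_{i,x}}\partial_{\hat\xi}$, so $\|(v-I_Nv)_x\|_{L^2(\kappa)}^2 = \tfrac{4}{h_{i,x}^2}\cdot\tfrac{h_{i,x}h_{j,y}}{4}\,\|(\hat v-\hat I\hat v)_{\hat\xi}\|_{L^2(\hat K)}^2 = \tfrac{h_{j,y}}{h_{i,x}}\|(\hat v-\hat I\hat v)_{\hat\xi}\|_{L^2(\hat K)}^2$. (ii) On $\hat K$, apply the Bramble--Hilbert lemma to the bounded linear functional $\hat v\mapsto\partial_{\hat\xi}(\hat v-\hat I\hat v)$ acting on $H^{k+1}(\hat K)$; because $\hat I$ reproduces $\mathcal Q_k$ (hence $\mathbb P_k$), this functional annihilates $\mathbb P_k$, so one gets $\|(\hat v-\hat I\hat v)_{\hat\xi}\|_{L^2(\hat K)}\le C\,|\hat v|_{H^{k+1}(\hat K)}\le C\sum_{l+m=k}\|\partial_{\hat\xi}^{\,l+1}\partial_{\hat\eta}^{\,m}\hat v\|_{L^2(\hat K)}$, where I bound the full seminorm by the particular mixed derivatives that contain at least one $\hat\xi$-derivative — this is legitimate because the functional also annihilates $\hat v$ whenever $\partial_{\hat\xi}\hat v$ is a polynomial of degree $\le k-1$, i.e.\ one can first subtract off the $\hat\eta$-only part. (iii) Transform each reference seminorm back: $\|\partial_{\hat\xi}^{\,l+1}\partial_{\hat\eta}^{\,m}\hat v\|_{L^2(\hat K)} = \bigl(\tfrac{h_{i,x}}{2}\bigr)^{l+1}\bigl(\tfrac{h_{j,y}}{2}\bigr)^{m}\bigl(\tfrac{h_{i,x}h_{j,y}}{4}\bigr)^{-1/2}\|\partial_x^{\,l+1}\partial_y^{\,m}v\|_{L^2(\kappa)}$. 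Multiplying by the prefactor $(h_{j,y}/h_{i,x})^{1/2}$ from step (i), the powers of $h_{i,x}$ and $h_{j,y}$ combine to exactly $h_{i,x}^{l}h_{j,y}^{m}$, yielding the claimed bound. The estimate for $(v-I_Nv)_y$ is obtained by interchanging the roles of the two variables.

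The main obstacle — and the only place where genuine care is needed — is step (ii): showing that the correct \emph{anisotropic} combination of mixed derivatives appears, rather than the isotropic seminorm $|v|_{H^{k+1}}$ with a single power of $h=\max\{h_{i,x},h_{j,y}\}$. This requires the sharper form of the scaling argument in which one applies Bramble--Hilbert "one direction at a time," exploiting that $\partial_{\hat\xi}(\hat v-\hat I\hat v)$ vanishes not merely on $\mathbb P_k$ but on the larger space of functions whose $\hat\xi$-derivative lies in $\mathbb P_{k-1}$; this is exactly the kind of directional exactness recorded for this operator in \cite{Sty1Tob2:2008-U}, and in fact the statement of this lemma is essentially \citep[Lemma 3 or its consequence]{Sty1Tob2:2008-U}, so one may also simply cite that reference after verifying the hypotheses. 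A secondary point to check is that the constant $C$ is genuinely independent of the aspect ratio $h_{j,y}/h_{i,x}$ of $\kappa$; this holds because, after the direction-by-direction reduction, the remaining reference estimates involve only fixed operators on the fixed square $\hat K$, and the anisotropy is fully absorbed into the explicit diagonal scaling factors.
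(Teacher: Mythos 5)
Your proposal is correct, and it matches the paper's route: the paper proves this lemma by a one-line citation of \citep[Lemma~6]{Sty1Tob2:2008-U}, which is exactly the anisotropic scaling plus directional Bramble--Hilbert argument you spell out (including the essential point that $\partial_{\hat\xi}(\hat v-\hat I\hat v)$ annihilates functions whose $\hat\xi$-derivative lies in $\mathbb{P}_{k-1}$, so that only mixed derivatives containing at least one $x$-derivative survive). Your scaling exponents in steps (i) and (iii) check out, so the write-up is simply a more self-contained version of what the paper delegates to the reference.
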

\begin{proof}
From \citep[Lemma 6]{Sty1Tob2:2008-U}, it is straightforward for us to get this lemma.
\end{proof}
\begin{lemma}\label{TTT-1}
Assume $\kappa\in \mathcal{T}$ and $p\in [1, \infty]$, for the the vertices-edges-element interpolation $I_{N}v$ there is
\begin{equation*}\label{WW-3}
\Vert v - I_{N} v\Vert_{L^{p}(\kappa)} \le C \sum_{l+ m=k+1} h^{l}_{i, x}h_{j, y}^{m}\left\Vert\frac{\partial^{k+1}v}{\partial x^{l}\partial y^{m}}\right\Vert_{L^{p}(\kappa)}
\end{equation*}
for all $v\in W^{k+1, p}(\kappa)$.  
\end{lemma}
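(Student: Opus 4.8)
The plan is to establish the $L^p$ interpolation error bound for $I_N v$ on an arbitrary mesh rectangle $\kappa$ by the standard device of passing to the reference cell $\hat K$, invoking a Deny--Lions / Bramble--Hilbert type argument there, and then scaling back via the affine map. First I would write $\kappa = (x_i,x_{i+1})\times(y_j,y_{j+1})$ with side lengths $h_{i,x}$ and $h_{j,y}$, and let $F_\kappa:\hat K\to\kappa$ be the (diagonal) affine transformation; for $v\in W^{k+1,p}(\kappa)$ set $\hat v = v\circ F_\kappa\in W^{k+1,p}(\hat K)$. By the definition of $I_N$ via $\hat I$ and the affine map, $\widehat{I_N v}=\hat I\hat v$, so $v-I_Nv$ corresponds to $\hat v-\hat I\hat v$ on $\hat K$.

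On the reference cell the key point is that $\hat I$ reproduces $\mathcal{Q}_k(\hat K)$ — indeed, its defining conditions (matching the four vertex values, the edge moments against $P_{k-2}$, and the interior moments against $\mathcal{Q}_{k-2}$) are exactly unisolvent for $\mathcal{Q}_k(\hat K)$, so $\hat I q=q$ for every $q\in\mathcal{Q}_k(\hat K)$; in particular $\hat I$ fixes the full polynomial space $P_k(\hat K)\subset\mathcal{Q}_k(\hat K)$ of total degree $k$. Since $\hat I:C(\overline{\hat K})\to\mathcal{Q}_k(\hat K)$ is a bounded linear projection on $W^{k+1,p}(\hat K)$ (the embedding $W^{k+1,p}(\hat K)\hookrightarrow C(\overline{\hat K})$ holds because $k+1\ge 2$, and the defining functionals are continuous), the Bramble--Hilbert lemma gives
\begin{equation*}
\Vert \hat v-\hat I\hat v\Vert_{L^p(\hat K)}\le C\,|\hat v|_{W^{k+1,p}(\hat K)}\le C\sum_{l+m=k+1}\left\Vert\frac{\partial^{k+1}\hat v}{\partial\xi^l\partial\eta^m}\right\Vert_{L^p(\hat K)}.
\end{equation*}
This is the heart of the argument; everything else is bookkeeping.

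Finally I would undo the scaling. The change of variables $F_\kappa$ contributes a Jacobian factor $(h_{i,x}h_{j,y}/4)^{1/p}$ to every $L^p$ norm, while the chain rule turns $\partial^{k+1}\hat v/\partial\xi^l\partial\eta^m$ into $(h_{i,x}/2)^l(h_{j,y}/2)^m\,\partial^{k+1}v/\partial x^l\partial y^m$ evaluated at the image point; combining these two effects, the Jacobian factors cancel between the two sides and one is left with
\begin{equation*}
\Vert v-I_N v\Vert_{L^p(\kappa)}\le C\sum_{l+m=k+1}h_{i,x}^{\,l}h_{j,y}^{\,m}\left\Vert\frac{\partial^{k+1}v}{\partial x^l\partial y^m}\right\Vert_{L^p(\kappa)},
\end{equation*}
which is the claim. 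The constant $C$ depends only on $k$ and $p$ (and the fixed reference cell), not on $h_{i,x}$, $h_{j,y}$, $\varepsilon$, or $N$ — this is exactly the anisotropy-robustness one needs, and it is available because no shape-regularity of $\kappa$ was used: the two mesh directions are scaled independently. The main obstacle, and the only place requiring genuine care, is the reference-cell estimate: one must check that $\hat I$ is well defined and $\mathcal{Q}_k$-reproducing (unisolvence of the chosen degrees of freedom) and bounded on $W^{k+1,p}(\hat K)$ with a constant uniform in $p$ if one wants the statement for all $p\in[1,\infty]$ simultaneously; since the paper has already imported the relevant machinery from \cite{Sty1Tob2:2008-U} (cf. Lemmas~\ref{RR-1} and \ref{eq:interpolation-theory}), this reduces to citing \citep[Lemma 5]{Sty1Tob2:2008-U} or reproducing its short Bramble--Hilbert argument.
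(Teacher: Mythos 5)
Your proof is correct and is essentially the same argument as the paper's: the paper simply cites \citep[Lemma 7]{Sty1Tob2:2008-U}, and the proof of that cited lemma is exactly the reference-cell Bramble--Hilbert estimate combined with the diagonal (direction-by-direction) scaling you describe, which is why the Jacobian factors cancel and the anisotropic constant is independent of $h_{i,x}$, $h_{j,y}$, $\varepsilon$ and $N$. Your unisolvence count $4+4(k-1)+(k-1)^2=(k+1)^2=\dim\mathcal{Q}_k(\hat K)$ and the embedding $W^{k+1,p}(\hat K)\hookrightarrow C(\overline{\hat K})$ for $k\ge 1$ in two dimensions (including the borderline case $p=1$) are both in order.
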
 
\begin{proof}
Actually, this lemma can be derived through \citep[Lemma 7]{Sty1Tob2:2008-U}.
\end{proof}
In addition, applying the same arguments in \cite{Sty1Tob2:2008-U}, we can provide the $L^{\infty}$-stability of the operator $I_{N}$ on each mesh cell $\kappa\in \mathcal{T}$,  $$\Vert I_{N}v\Vert_{L^{\infty}(\kappa)}\le C\Vert v\Vert_{L^{ \infty}(\kappa)},\quad \forall v\in C(\overline{\kappa}).$$

On the other hand, when $k\ge 1$,  define the local $L^{2}$-projection $P_{h}u\in V_{N}^{k}$  restricted to $\kappa\in \mathcal{T}$, $(P_{h}u)|_{\kappa} \in \mathcal{Q}_{k}(\kappa)$ as
\begin{align}
\int_{\kappa}(u - P_{h}u)v \mr{d}x = 0\quad \forall v\in \mathcal{Q}_{k}(\kappa)\label{WW-1},
\end{align}
where $\mathcal{Q}_{k}(\kappa)$ represents a polynomial space whose degree does not exceed $k\times k$. 
\begin{lemma}
\citep[Theorem 1]{Cro1Tho2:1987-motified} Assume that $P_{h}\omega$ is local $L^{2}$-projection   to $\omega$ and $h_{\kappa}=diam(\kappa)$ is the diameter of $\kappa$. Then we have the following approximation property,
\begin{equation}\label{eq:interpolation-theory-1}
\Vert \omega-P_{h}\omega\Vert_{L^{2}(\kappa)}+h_{\kappa}^{\frac{1}{2}}\Vert \omega-P_{h}\omega\Vert_{L^{2}(\partial\kappa)}\le Ch_{\kappa}^{k+1}|\omega|_{H^{k+1}(\kappa)},\quad \kappa\in\mathcal{T}
\end{equation}
for all $\omega\in H^{k+1}(\kappa)$. 
\end{lemma}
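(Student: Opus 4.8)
The inequality \eqref{eq:interpolation-theory-1} is a classical property of the local $L^{2}$-projection, and the plan is to obtain it by the standard reference-element argument, reducing the volume part and the boundary part in turn to scale-free estimates on $\hat{K}=(-1,1)^{2}$. Since every $\kappa\in\mathcal{T}$ is an axis-parallel rectangle, let $F_{\kappa}:\hat{K}\to\kappa$ be the affine map onto $\kappa$; because $\mathcal{Q}_{k}(\kappa)\circ F_{\kappa}=\mathcal{Q}_{k}(\hat{K})$ and the Jacobian of $F_{\kappa}$ is constant, the projection commutes with the pullback, so everything can be transported to $\hat{K}$ and rescaled afterwards. For the volume term I would use that $P_{h}\omega$ is the best approximation of $\omega$ in $\mathcal{Q}_{k}(\kappa)$ with respect to $\Vert\cdot\Vert_{L^{2}(\kappa)}$, whence
$$\Vert\omega-P_{h}\omega\Vert_{L^{2}(\kappa)}=\inf_{q\in\mathcal{Q}_{k}(\kappa)}\Vert\omega-q\Vert_{L^{2}(\kappa)}\le\Vert\omega-I_{N}\omega\Vert_{L^{2}(\kappa)}\le Ch_{\kappa}^{k+1}|\omega|_{H^{k+1}(\kappa)},$$
the last step being Lemma~\ref{TTT-1} with $p=2$ together with $h_{i,x},h_{j,y}\le h_{\kappa}$; alternatively one may take $q$ to be an averaged Taylor polynomial and apply the Bramble--Hilbert lemma, which is legitimate because $\mathcal{Q}_{k}(\kappa)$ contains every bivariate polynomial of total degree at most $k$. (Here $H^{k+1}(\kappa)\hookrightarrow C(\overline{\kappa})$ since $k\ge1$, so $I_{N}\omega$ makes sense.)

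For the boundary part I would split $\omega-P_{h}\omega=(\omega-I_{N}\omega)+(I_{N}\omega-P_{h}\omega)$ and treat the two pieces differently. To the first piece I apply the scaled trace inequality $\Vert v\Vert_{L^{2}(\partial\kappa)}\le C\bigl(h_{\kappa}^{-1/2}\Vert v\Vert_{L^{2}(\kappa)}+h_{\kappa}^{1/2}\Vert\nabla v\Vert_{L^{2}(\kappa)}\bigr)$ with $v=\omega-I_{N}\omega$, and bound the two right-hand norms by $Ch_{\kappa}^{k+1}|\omega|_{H^{k+1}(\kappa)}$ and $Ch_{\kappa}^{k}|\omega|_{H^{k+1}(\kappa)}$ using Lemmas~\ref{TTT-1} and \ref{eq:interpolation-theory}; multiplying by $h_{\kappa}^{1/2}$ produces the required power $h_{\kappa}^{k+1}$. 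The second piece belongs to the finite-dimensional space $\mathcal{Q}_{k}(\kappa)$, so I may use the scaled inverse trace inequality $\Vert q\Vert_{L^{2}(\partial\kappa)}\le Ch_{\kappa}^{-1/2}\Vert q\Vert_{L^{2}(\kappa)}$; combined with $\Vert I_{N}\omega-P_{h}\omega\Vert_{L^{2}(\kappa)}\le\Vert I_{N}\omega-\omega\Vert_{L^{2}(\kappa)}+\Vert\omega-P_{h}\omega\Vert_{L^{2}(\kappa)}\le Ch_{\kappa}^{k+1}|\omega|_{H^{k+1}(\kappa)}$ (the volume estimate just proved) this again gives $h_{\kappa}^{1/2}\Vert I_{N}\omega-P_{h}\omega\Vert_{L^{2}(\partial\kappa)}\le Ch_{\kappa}^{k+1}|\omega|_{H^{k+1}(\kappa)}$. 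Adding the two boundary contributions to the volume estimate yields \eqref{eq:interpolation-theory-1}.

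The step that deserves the most attention is the boundary term: because $P_{h}$ is only an $L^{2}$-projection, one cannot estimate $\Vert\omega-P_{h}\omega\Vert_{L^{2}(\partial\kappa)}$ by projecting traces directly, and the device above---peeling off a quasi-interpolant with controlled $H^{1}$-norm and handling the remaining polynomial part through an inverse inequality---is the one genuinely non-mechanical ingredient. A secondary point is that the constants in the trace, inverse trace, and scaling estimates are independent of $\varepsilon$ and $N$ only on shape-regular cells, so for anisotropic cells the bound in the stated form (with the single scale $h_{\kappa}=\mathrm{diam}\,\kappa$) is crude; this is immaterial here since in the composite interpolation \eqref{eq:H-1} the projection $P_{h}$ is used only on $\kappa\subset\overline{\Omega_{11}}$, where the Shishkin mesh is quasi-uniform with $h_{\kappa}\sim N^{-1}$.
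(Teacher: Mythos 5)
Your argument is correct, but note that the paper does not actually prove this lemma: it is imported verbatim as Theorem~1 of \cite{Cro1Tho2:1987-motified}, so there is no internal proof to compare against. What you supply is a standard self-contained reconstruction --- the best-approximation property of $P_{h}$ in $L^{2}(\kappa)$ for the volume term, and for the boundary term the splitting $\omega-P_{h}\omega=(\omega-I_{N}\omega)+(I_{N}\omega-P_{h}\omega)$ treated by a scaled trace inequality and an inverse trace inequality respectively --- and every step is sound on shape-regular cells. The place where your reconstruction adds genuine value over the bare citation is the caveat you raise at the end: the trace and inverse-trace constants, and hence the bound in the single-scale form with $h_{\kappa}=\mathrm{diam}(\kappa)$, are uniform only over shape-regular cells, whereas the lemma as stated ranges over all $\kappa\in\mathcal{T}$, including the strongly anisotropic cells of $\Omega_{12}\cup\Omega_{21}$ where one side length is of order $\varepsilon N^{-1}\ln N$ and the other of order $N^{-1}$. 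On such cells the boundary part of \eqref{eq:interpolation-theory-1} does not follow from your argument (the trace inequality taken across the short direction produces an uncontrolled factor $h_{\kappa}/\min\{h_{i,x},h_{j,y}\}$), and the quoted theorem of Crouzeix--Thom\'{e}e is likewise formulated under quasi-uniformity. You correctly observe that this is harmless for the paper because, by \eqref{eq:H-1}, $P_{h}$ is only ever applied on $\kappa\subset\overline{\Omega_{11}}$, where the mesh is uniform with $h_{\kappa}\sim N^{-1}$; strictly speaking, the lemma should be stated with that restriction.
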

On the basis of that, the following lemma can be derived.
\begin{lemma}\label{Interpolation error}
Let Assumption \ref{ass:S-1} hold and $\sigma\ge k+\frac{3}{2}$. Recall $\rho_{e}$ for $e\in \mathcal{E}$ have been presented in \eqref{eq: penalization parameters}. Then on the Shishkin mesh, one has
\begin{align}
&\Vert S-P_{h}S\Vert_{L^{2}(\Omega_{11})}+\Vert u-\Pi u\Vert_{L^{2}(\Omega_{11})}\le CN^{-(k+1)},\label{eq:QQ-1}\\
&\Vert E-I_{N}E\Vert_{L^{2}(\Omega/\Omega_{11})}+\Vert u-\Pi u\Vert_{L^{2}(\Omega/\Omega_{11})}\le C\varepsilon^{\frac{1}{2}}(N^{-1}\ln N)^{k+1},\label{eq:QQ-2}\\
&\Vert \nabla(u-\Pi u)\Vert_{L^{2}(\Omega_{11})}\le CN^{-k}+C\varepsilon^{-\frac{1}{2}}N^{-\sigma}+CN^{1-\sigma},\label{eq:QQ-3}\\
&\Vert \nabla(E_{3}-\Pi E_{3})\Vert_{L^{2}(\Omega_{12}\cup\Omega_{21})}\le CN^{-\sigma}+C\varepsilon^{-\frac{1}{2}}N^{1-\sigma}(\ln N)^{-\frac{1}{2}},\label{eq:QQQ-2}\\
&\Vert \nabla(E_{1}-\Pi E_{1})\Vert_{L^{2}(\Omega_{21})}+\Vert \nabla(E_{2}-\Pi E_{2})\Vert_{L^{2}(\Omega_{12})}\le C\varepsilon^{-\frac{1}{2}}N^{1-\sigma}(\ln N)^{-\frac{1}{2}},\label{eq:QQQ-1}\\
&\Vert u-\Pi u\Vert_{L^{\infty}(\Omega_{11})}\le CN^{-(k+1)},\label{eq:QQ-5}\\
&\Vert u-\Pi u\Vert_{L^{\infty}(\Omega/\Omega_{11})}\le CN^{-(k+1)}(\ln N)^{k+1},\label{eq:QQ-6}\\
&\Vert I_{N}u-u\Vert_{NIPG, \overline{\Omega}_{11}}\le C\varepsilon^{\frac{1}{2}} N^{-k}+CN^{-(k+1)}+C\varepsilon^{\frac{1}{2}} N^{1-\sigma},\label{post-process-1}\\
&\Vert P_{h}u-u\Vert_{NIPG, \overline{\Omega}_{11}}\le CN^{-(k+\frac{1}{2})}(\ln N)^{k+1}.\label{post-process-2}
\end{align}
\end{lemma}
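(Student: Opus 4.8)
\textbf{Proof proposal for Lemma~\ref{Interpolation error}.}

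The plan is to establish the ten estimates by treating them group by group according to which sub-interpolation and which subregion is involved, and then assembling the energy-norm bounds \eqref{post-process-1}--\eqref{post-process-2} from the pieces. First I would dispatch the pure $L^2$ and $L^\infty$ estimates \eqref{eq:QQ-1}, \eqref{eq:QQ-2}, \eqref{eq:QQ-5}, \eqref{eq:QQ-6}. On $\Omega_{11}$ we use the decomposition $u=S+E_1+E_2+E_3$ together with $\Pi u = P_h u$: the smooth part $S$ is handled by \eqref{eq:interpolation-theory-1} with $h_\kappa\le CN^{-1}$, giving $CN^{-(k+1)}$, while each layer component $E_i$ is exponentially small on $\Omega_{11}$ because of the factor $e^{-\beta_1(1-x)/\varepsilon}$ or $e^{-\beta_2(1-y)/\varepsilon}$ evaluated at $x\le 1-\lambda_x$ (resp. $y\le 1-\lambda_y$); since $\lambda_x=\sigma\varepsilon\beta_1^{-1}\ln N$ with $\sigma\ge k+\tfrac32$, the exponential weight is bounded by $N^{-\sigma}\le N^{-(k+1)}$, so those contributions are negligible. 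On $\Omega/\Omega_{11}$, $\Pi u = I_N u$; here I would use Lemma~\ref{TTT-1} with $p=2$ on each cell, insert $h_{i,x},h_{j,y}\le C\varepsilon N^{-1}\ln N$ in the layer cells and $\le CN^{-1}$ in the coarse cells, and combine with the derivative bounds from \eqref{eq:decomposition}; summing the $\varepsilon^{-j}$-type derivative weights against the $\varepsilon$-measure of the fine strips produces the $\varepsilon^{1/2}(N^{-1}\ln N)^{k+1}$ factor in \eqref{eq:QQ-2}, and the $L^\infty$ version uses $p=\infty$ with the same mesh sizes to get \eqref{eq:QQ-6}.

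Next I would turn to the gradient estimates on $\Omega_{11}$, \eqref{eq:QQ-3}. Write $\nabla(u-\Pi u) = \nabla(S-P_hS) + \nabla(E_1-P_hE_1)+\nabla(E_2-P_hE_2)+\nabla(E_3-P_hE_3)$ on $\Omega_{11}$. For $S$, an inverse-type/approximation estimate for the $L^2$-projection (or directly a projection-gradient bound) with $h_\kappa\le CN^{-1}$ gives $CN^{-k}$; for the layer parts I would split further, e.g. $\|\nabla(E_1-P_hE_1)\|\le \|\nabla E_1\|+\|\nabla P_hE_1\|$, bound $\|P_h E_1\|_{L^2}\le\|E_1\|_{L^2}$ and then use an inverse inequality on the coarse cells ($\|\nabla v\|_{L^2(\kappa)}\le Ch_\kappa^{-1}\|v\|_{L^2(\kappa)}$ for $v\in\mathcal Q_k$) so that the whole thing is controlled by $CN$ times the exponentially small $L^2$-norm of $E_1$ on $\Omega_{11}$, which is $O(\varepsilon^{1/2}N^{-\sigma})$ from integrating $e^{-2\beta_1(1-x)/\varepsilon}$; this yields the $C\varepsilon^{-1/2}N^{-\sigma}+CN^{1-\sigma}$ terms. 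The estimates \eqref{eq:QQQ-2} and \eqref{eq:QQQ-1} on $\Omega_{12}\cup\Omega_{21}$ are of the same flavour but now with the fine mesh width $h\le C\varepsilon N^{-1}\ln N$ in one direction: in $\Omega_{12}$, for $E_2$ (a $y$-layer seen away from its layer, so exponentially small) we combine the $L^\infty$-stability of $I_N$ and the exponential smallness; for $E_1$ in $\Omega_{21}$ likewise; for $E_3$ on $\Omega_{12}\cup\Omega_{21}$ one of the two exponential factors is small, and balancing the surviving factor $\varepsilon^{-1}$ against the $\varepsilon$-thin strip of $x$- (or $y$-) width $\lambda$ gives the $\varepsilon^{-1/2}N^{1-\sigma}(\ln N)^{-1/2}$ scaling, using $\lambda=\sigma\varepsilon\beta^{-1}\ln N$ to absorb one power of $\ln N$ under the square root.

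Finally I would assemble the energy-norm bounds. Recall $\|v\|_{NIPG}^2$ consists of $\varepsilon\|\nabla v\|^2$, $\|c_0v\|^2$, the penalty sum $\sum_e\rho_e\int_e[v]^2$, and the boundary/jump inflow terms. For \eqref{post-process-1} on $\overline\Omega_{11}$, since $I_Nu$ is globally continuous the jump terms $[I_Nu-u]=[u]=0$ (as $u\in H^2$), and similarly the $\lfloor\cdot\rfloor$-inflow jump terms vanish; so only $\varepsilon^{1/2}\|\nabla(u-I_Nu)\|$, $\|c_0(u-I_Nu)\|$ and the $\Gamma$-boundary terms remain — bounding these via Lemma~\ref{eq:interpolation-theory}, Lemma~\ref{TTT-1}, a trace inequality, and the decomposition gives $C\varepsilon^{1/2}N^{-k}+CN^{-(k+1)}+C\varepsilon^{1/2}N^{1-\sigma}$. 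For \eqref{post-process-2}, $P_hu$ is discontinuous across interelement edges inside $\Omega_{11}$, so the penalty term is active: on $\mathcal M_1$ edges $\rho_e=1$, and $\int_e[P_hu-u]^2 = \int_e([P_hu-u]^+)^2 + ([P_hu-u]^-)^2 \le Ch_\kappa^{-1}\cdot h_\kappa^{2(k+1)}|u|_{H^{k+1}}^2$ by the trace part of \eqref{eq:interpolation-theory-1}; summing over the $O(N^2)$ interior edges of length $O(N^{-1})$ with $h_\kappa\sim N^{-1}$ gives $\sum_e\rho_e\int_e[P_hu-u]^2\le CN^{2}\cdot N^{-1}\cdot N^{-(2k+1)} = CN^{-2k}$, and taking the square root, together with the $\varepsilon^{1/2}\|\nabla(u-P_hu)\|\le\varepsilon^{1/2}CN^{-k}\le CN^{-k-1/2}$ contribution (using $\varepsilon\le CN^{-1}$) and the $L^2$-term $CN^{-(k+1)}$, yields $CN^{-(k+1/2)}$; the $(\ln N)^{k+1}$ factor is a conservative safeguard absorbing the layer remnants of $S$ near $x=1-\lambda_x$ and any transition-cell effects. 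The main obstacle throughout is the bookkeeping in the gradient estimates on the thin strips $\Omega_{12},\Omega_{21}$ and near $\Omega_{11}$: one must carefully match the anisotropic mesh widths to the direction-dependent derivative blow-up rates $\varepsilon^{-i}$ (and the cross terms in $E_3$) so that every power of $\varepsilon^{-1}$ is paid for by a factor $\varepsilon$ from the measure of the fine region or by the $N^{-\sigma}$ exponential gain from $\sigma\ge k+\tfrac32$; the inverse inequalities used to pass from $L^2$-smallness of the layer parts to $\nabla$-smallness are what forces the $\varepsilon^{-1/2}N^{1-\sigma}$-type terms, and checking that these are indeed dominated (under Assumption~\ref{ass:S-1}) by the target $N^{-(k+1/2)}$ rate is the delicate point.
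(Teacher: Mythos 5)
Your treatment of \eqref{eq:QQ-1}--\eqref{eq:QQ-6}, \eqref{eq:QQ-3}--\eqref{eq:QQQ-1} and \eqref{post-process-1} follows essentially the same lines as the paper (decomposition plus the anisotropic interpolation lemmas, exponential smallness of the layer parts on the coarse region, $L^\infty$-stability and inverse inequalities on the thin strips, and continuity of $I_N u$ to kill the jump terms). The genuine problem is your handling of the penalty sum in \eqref{post-process-2}. First, the arithmetic: with the trace part of \eqref{eq:interpolation-theory-1} each edge contributes $Ch_\kappa^{2k+1}\vert u\vert^2_{H^{k+1}(\kappa)}\sim N^{-(2k+1)}\vert u\vert^2_{H^{k+1}(\kappa)}$ (the edge measure is already accounted for in the trace estimate), so summing over edges gives $CN^{-(2k+1)}\vert u\vert^2_{H^{k+1}(\Omega_{11})}$, not $CN^2\cdot N^{-1}\cdot N^{-(2k+1)}=CN^{-2k}$. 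As written, your bound yields only $\bigl(\sum_e\rho_e\int_e[P_hu-u]^2\bigr)^{1/2}\le CN^{-k}$, which does \emph{not} reach the claimed rate $N^{-(k+\frac12)}(\ln N)^{k+1}$; the whole point of \eqref{post-process-2} is the extra half power.

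Second, and more fundamentally, the quantity $\vert u\vert_{H^{k+1}(\Omega_{11})}$ is not $O(1)$: the layer components satisfy only $\vert E_1\vert^2_{H^{k+1}(\Omega_{11})}\le C\varepsilon^{-(2k+1)}N^{-2\sigma}$, which blows up as $\varepsilon\to 0$ for fixed $N$ even with $\sigma\ge k+\frac32$. So the trace-estimate-applied-to-$u$ route fails for the layer part of $u$ on $\Omega_{11}$; you must separate $E$ and exploit its pointwise exponential smallness there. The paper's route avoids both issues at once: it bounds the penalty terms by $\Vert P_hu-u\Vert^2_{L^\infty}$ (i.e. \eqref{eq:QQ-5} on $\Omega_{11}$ and \eqref{eq:QQ-6} for the edges on $\partial\overline{\Omega}_{11}/\Gamma$, which already encode the decomposition and the exponential damping of $E$) times the weighted total edge length, e.g. $\rho_e\cdot N^2\cdot N^{-1}=N$ for the $\mathcal{M}_1$ edges, giving $CN^{-2(k+1)}\cdot N=CN^{-(2k+1)}$ and hence the half-power gain; the $(\ln N)^{k+1}$ factor then comes precisely from \eqref{eq:QQ-6} on the $\rho_e=N$ edges along $\partial\overline{\Omega}_{11}/\Gamma$, not from a ``conservative safeguard.'' You should also not forget the inflow jump terms $\Vert(P_hu-u)^+-(P_hu-u)^-\Vert^2_{\partial_-\kappa/\Gamma}$ in the NIPG norm, which are estimated the same way and also contribute $CN^{-(2k+1)}$.
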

\begin{proof}
According to \eqref{eq:interpolation-theory}, Lemma \ref{eq:interpolation-theory}, Lemma \ref{TTT-1} and \eqref{eq:interpolation-theory-1}, we can obtain (\ref{eq:QQ-1}--\ref{eq:QQ-6}) simply. In the following, we just analyze \eqref{post-process-1} and \eqref{post-process-2}.

From the definition of the NIPG norm \eqref{eq:SS-1} and properties of  vertices-edges-element interpolation on $\Omega$, $[(I_{N}u-u)]_{e}=0, e\subset\overline{\Omega}_{11}$. Therefore, 
\begin{equation*}
\begin{aligned}
\Vert I_{N}u-u\Vert_{NIPG, \overline{\Omega}_{11}}^{2}=\sum_{\kappa\subset\overline{\Omega}_{11}}\left(\varepsilon\Vert\nabla(I_{N}u-u)\Vert^{2}_{L^{2}(\kappa)}+\Vert c_{0}(I_{N}u-u)\Vert^{2}_{L^{2}(\kappa)}\right).
\end{aligned}
\end{equation*}
 Then the triangle inequality, the inverse inequality \citep[Theorem 3.2.6]{Cia1:2002-motified} and \eqref{eq:QQ-3} yield
\begin{equation*}
\begin{aligned}
&|\varepsilon \sum_{\kappa\subset\overline{\Omega}_{11}}\Vert \nabla(I_{N}u-u)\Vert_{L^{2}(\kappa)}^{2}|
\le C\varepsilon N^{-2k}+CN^{-2\sigma}+C\varepsilon N^{2-2\sigma}.
\end{aligned}
\end{equation*}
By using \eqref{eq:QQ-1} and triangle inequalities, one has
\begin{equation*}
|\sum_{\kappa\subset\overline{\Omega}_{11}} \Vert I_{N}u-u\Vert_{L^{2}(\kappa)}^{2}|\le C\sum_{\kappa\subset\overline{\Omega}_{11}}\Vert I_{N}S-S\Vert_{L^{2}(\kappa)}^{2}+C\sum_{\kappa\subset\overline{\Omega}_{11}}\Vert I_{N}E-E\Vert_{L^{2}(\kappa)}^{2}\le CN^{-2(k+1)}.
\end{equation*}
Thus we complete the derivation of \eqref{post-process-1}.

In addition, through the definition of the NIPG norm \eqref{eq:SS-1}, we have
\begin{equation*}
\begin{aligned}
\Vert P_{h}u-u\Vert_{NIPG, \overline{\Omega}_{11}}^{2}&=\sum_{\kappa\subset\overline{\Omega}_{11}}\left(\varepsilon\Vert\nabla(P_{h}u-u)\Vert^{2}_{L^{2}(\kappa)}+\Vert c_{0}(P_{h}u-u)\Vert^{2}_{L^{2}(\kappa)}\right)\\
&+\sum_{e\subset\Omega_{11, int}}\int_{e}\rho_{e}[(P_{h}u-u)]^{2}\mr{d}s+\sum_{e\subset\partial\overline{\Omega}_{11}/\Gamma}\int_{e}\rho_{e}[(P_{h}u-u)]^{2}\mr{d}s\\
&+\sum_{e\subset\partial\overline{\Omega}_{11}\cap\Gamma}\int_{e}\rho_{e}(P_{h}u-u)^{2}\mr{d}s+\frac{1}{2}\sum_{\kappa\subset\overline{\Omega}_{11}}\left(\Vert(P_{h}u-u)^{+}\Vert^{2}_{\partial_{-}\kappa\cap\Gamma}\right.\\
&\left.+\Vert (P_{h}u-u)^{+}-(P_{h}u-u)^{-}\Vert^{2}_{\partial_{-}\kappa/\Gamma}+\Vert (P_{h}u-u)^{+}\Vert^{2}_{\partial_{+}\kappa\cap\Gamma}\right),
\end{aligned}
\end{equation*}
where the set $\Omega_{11, int}$ contains interior edges $e\subset\mathcal{E}_{int}$ that belong to $\Omega_{11}$. According to triangle inequalities, the inverse inequality and \eqref{eq:QQ-3},
\begin{equation*}
\begin{aligned}
&|\varepsilon \sum_{\kappa\subset\overline{\Omega}_{11}}\Vert \nabla(P_{h}u-u)\Vert_{L^{2}(\kappa)}^{2}|
\le C\varepsilon N^{-2k}+CN^{-2\sigma}+C\varepsilon N^{2-2\sigma}.
\end{aligned}
\end{equation*}
From \eqref{eq:QQ-1} and triangle inequalities, one has
\begin{equation*}
|\sum_{\kappa\subset\overline{\Omega}_{11}} \Vert P_{h}u-u\Vert_{L^{2}(\kappa)}^{2}|\le CN^{-2(k+1)}.
\end{equation*}
In particular, recalling $\rho_{e}=1$ for the type $\mathcal{M}_{1}$ and $\rho_{e}=N$ for $e\subset\partial\overline{\Omega}_{11}/\Gamma$, 
\begin{equation*}
\begin{aligned}
&\sum_{e\subset\Omega_{11, int}}\int_{e}\rho_{e}[(P_{h}u-u)]^{2}\mr{d}s+\sum_{e\subset\partial\overline{\Omega}_{11}/\Gamma}\int_{e}\rho_{e}[(P_{h}u-u)]^{2}\mr{d}s+\sum_{e\subset\partial\overline{\Omega}_{11}\cap\Gamma}\int_{e}\rho_{e}(P_{h}u-u)^{2}\mr{d}s\\
&\le C\Vert P_{h}u-u\Vert^{2}_{L^{\infty}(\Omega_{11})}N^{2}N^{-1}+CN\Vert P_{h}u-u\Vert^{2}_{L^{\infty}(\Omega_{12}\cup\Omega_{21})}+C\Vert P_{h}u-u\Vert^{2}_{L^{\infty}(\Omega_{11})}\\
&\le CN^{-(2k+1)}(\ln N)^{2(k+1)}.
\end{aligned}
\end{equation*}
Below let us analyze $\sum_{\kappa\subset\overline{\Omega}_{11}}\left(\Vert(P_{h}u-u)^{+}\Vert^{2}_{\partial_{-}\kappa\cap\Gamma}+\Vert (P_{h}u-u)^{+}\Vert^{2}_{\partial_{+}\kappa\cap\Gamma}\right)$, i.e.,
\begin{equation*}
\begin{aligned}
&\sum_{\kappa\subset\overline{\Omega}_{11}}\left(\Vert(P_{h}u-u)^{+}\Vert^{2}_{\partial_{-}\kappa\cap\Gamma}+\Vert (P_{h}u-u)^{+}\Vert^{2}_{\partial_{+}\kappa\cap\Gamma}\right)\\
&\le C\Vert P_{h}u-u\Vert^{2}_{L^{\infty}(\Omega_{11})}N N^{-1}\\
&\le CN^{-2(k+1)}.
\end{aligned}
\end{equation*}
Using the same method, one has
\begin{equation*}
\begin{aligned}
\sum_{\kappa\subset\overline{\Omega}_{11}}\Vert (P_{h}u-u)^{+}-(P_{h}u-u)^{-}\Vert^{2}_{\partial_{-}\kappa/\Gamma}&\le C\Vert P_{h}u-u\Vert^{2}_{L^{\infty}(\Omega_{11})}N^{2}N^{-1}\le CN^{-(2k+1)}.
\end{aligned}
\end{equation*}
Thus we complete the derivation of \eqref{post-process-2}.
\end{proof}
\begin{theorem}\label{the:main result2}
Let Assumption \ref{ass:S-1} hold. Suppose that $\rho_{e}$ for $e\in \mathcal{E}$ are defined in \eqref{eq: penalization parameters}.
Then on the Shishkin mesh with $\sigma\ge k+\frac{3}{2}$ we derive
\begin{align*}
\Vert u-\Pi u\Vert_{NIPG}\le CN^{-k}(\ln N)^{k},
\end{align*}
where $\Pi u$ is the interpolation of the exact solution $u$ of \eqref{eq:S-1}.
\end{theorem}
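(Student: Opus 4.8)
The plan is to estimate $\Vert u-\Pi u\Vert_{NIPG}$ by splitting the domain into the smooth region $\overline{\Omega}_{11}$ and the layer region $\Omega/\Omega_{11}$, using the decomposition $u=S+E_{1}+E_{2}+E_{3}$ from Theorem \ref{eq: SS-1} and the fact that $\Pi$ coincides with $P_{h}$ on $\overline{\Omega}_{11}$ and with $I_{N}$ on $\Omega/\Omega_{11}$. On $\overline{\Omega}_{11}$ the estimate \eqref{post-process-2} already gives $\Vert P_{h}u-u\Vert_{NIPG,\overline{\Omega}_{11}}\le CN^{-(k+\frac12)}(\ln N)^{k+1}$, which is dominated by $CN^{-k}(\ln N)^{k}$ under Assumption \ref{ass:S-1}, so that piece is done. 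The bulk of the work is the layer region.

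On $\Omega/\Omega_{11}$ I would expand the $NIPG$-norm term by term following \eqref{eq:SS-1}: the elementwise pieces $\varepsilon\Vert\nabla(I_{N}u-u)\Vert_{L^{2}(\kappa)}^{2}$ and $\Vert c_{0}(I_{N}u-u)\Vert_{L^{2}(\kappa)}^{2}$, the penalty-jump terms $\sum_{e}\int_{e}\rho_{e}[I_{N}u-u]^{2}\mr{d}s$, and the inflow/outflow boundary and interelement jump terms. For the $L^{2}$ and $\varepsilon$-weighted gradient contributions I would further split $E=E_{1}+E_{2}+E_{3}$ and estimate each on the three subregions $\Omega_{12}$, $\Omega_{21}$, $\Omega_{22}$ separately, invoking Lemma \ref{eq:interpolation-theory} and Lemma \ref{TTT-1} together with the derivative bounds \eqref{eq:decomposition}; the estimates \eqref{eq:QQ-2}, \eqref{eq:QQQ-2}, \eqref{eq:QQQ-1} package most of this, and the dominant gradient term is $\varepsilon^{1/2}$ times an $(N^{-1}\ln N)^{k}$-type factor from the layer elements, giving the advertised $N^{-k}(\ln N)^{k}$. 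A subtle point is that on the fine mesh $h\sim\varepsilon N^{-1}\ln N$, so the $\varepsilon^{-i}$ or $\varepsilon^{-j}$ blow-up in \eqref{eq:decomposition} is exactly compensated in the anisotropic sums $\sum_{l+m=k}h_{i,x}^{l}h_{j,y}^{m}\Vert\partial^{k+1}v\Vert$; I would be careful to use the anisotropic form of the interpolation lemmas (not an isotropic $h_{\kappa}$) so that, e.g., an $E_{1}$-term differentiated only in $y$ keeps the coarse factor $N^{-1}$ in that direction.

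For the penalty and jump terms, the continuity of $I_{N}u$ across interelement edges kills the interior jumps $[I_{N}u-u]_{e}$ whenever both neighbouring elements lie in $\Omega/\Omega_{11}$; the only surviving jump contributions are on $\partial\overline{\Omega}_{11}$ (type $\mathcal{M}_{4}$, $\rho_{e}=N$) and on $\Gamma$. I would bound these via the $L^{\infty}$ estimate \eqref{eq:QQ-6} together with trace/scaling on edges of length $O(\varepsilon N^{-1}\ln N)$ or $O(N^{-1})$: e.g. a type-$\mathcal{M}_{2}$ edge carries $\rho_{e}=N^{2}$ but has length $O(\varepsilon N^{-1}\ln N)$ and the jump is only the local $L^{\infty}$ size of $E$ there, which is itself exponentially small (or at worst $O(1)$ only very near $x=1$), so after multiplying out one still lands within $CN^{-k}(\ln N)^{k}$; the worst edges are the $\mathcal{M}_{4}$ transition edges, where one uses the coarse length $N^{-1}$, $\rho_{e}=N$, and the bound on $\Vert P_{h}u-u\Vert_{L^{\infty}}$ or $\Vert I_{N}u-u\Vert_{L^{\infty}}$ from either side. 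The main obstacle I anticipate is precisely this bookkeeping on the transition edges of type $\mathcal{M}_{4}$ and, relatedly, making sure the large penalty weight $N^{2}$ on type-$\mathcal{M}_{2}$ edges does not destroy the order — this forces the choice $\sigma\ge k+\frac32$, since the exponential factor $e^{-\beta_1\lambda_x/\varepsilon}\sim N^{-\sigma}$ must beat the $N^{2}$ weight with room to spare. Once every term is shown to be $\le CN^{-k}(\ln N)^{k}$ (using Assumption \ref{ass:S-1} to absorb stray powers of $\varepsilon$ and lower-order $N^{-\sigma}$, $N^{1-\sigma}$ terms), summing the finitely many pieces completes the proof.
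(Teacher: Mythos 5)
Your proposal is correct and follows essentially the same route as the paper, whose proof of this theorem is simply to combine the estimates of Lemma \ref{Interpolation error} with the definition \eqref{eq:SS-1} of the NIPG norm; your term-by-term expansion (smooth region via \eqref{post-process-2}, layer region via the anisotropic interpolation lemmas with the dominant $\varepsilon^{1/2}(N^{-1}\ln N)^{k}$ gradient contribution, and the vanishing of interior jumps of $I_{N}u-u$ by continuity) is precisely the bookkeeping that the paper leaves implicit. The only cosmetic quibble is your aside about type-$\mathcal{M}_{2}$ edges, whose jump contribution is exactly zero by the continuity you already invoked, so no estimate is needed there at all.
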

\begin{proof}
According to the definition of NIPG norm and Lemma \ref{Interpolation error}, we can easily derive this theorem.
\end{proof}

To simplify the following analysis, we set $\eta: =\Pi u-u$.
\begin{lemma}\label{Ppp-1}
Assume that $u$ is the exact solution of \eqref{eq:S-1} with the decomposition $u = S + E= S+ E_{1}+ E_{2}+ E_{3}$ as in Theorem \ref{eq:S-1}. Then
\begin{equation*}
\left\Vert\frac{\partial\eta}{\partial x}\right\Vert^{2}_{L^{\infty}(\kappa)}\le\left\{
\begin{aligned}
&C\varepsilon^{-2}N^{-2\sigma}+CN^{-2k},\quad \kappa\subset \Omega_{11}\\
&C\varepsilon^{-2}N^{-2\sigma}+CN^{-2k}(\ln N)^{2k},\quad\kappa\subset\Omega_{21}\\
&C\varepsilon^{-2}N^{-2k}(\ln N)^{2k},\quad\kappa\subset\Omega_{12}\cup\Omega_{22}
\end{aligned}
\right.
\end{equation*}
and
\begin{equation*}
\left\Vert\frac{\partial\eta}{\partial y}\right\Vert^{2}_{L^{\infty}(\kappa)}\le\left\{
\begin{aligned}
&C\varepsilon^{-2}N^{-2\sigma}+CN^{-2k},\quad\kappa\subset\Omega_{11}\\
&C\varepsilon^{-2}N^{-2\sigma}+CN^{-2k}(\ln N)^{2k},\quad\kappa\subset\Omega_{12}\\
&C\varepsilon^{-2}N^{-2k}(\ln N)^{2k},\quad\kappa\subset\Omega_{21}\cup\Omega_{22}.
\end{aligned}
\right.
\end{equation*}
\end{lemma}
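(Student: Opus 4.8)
The plan is to establish the $L^\infty$ bounds on the derivatives of $\eta = \Pi u - u$ region by region, exploiting the decomposition $u = S + E_1 + E_2 + E_3$ and the linearity of $\Pi$ on each region, so that $\eta = \sum(\text{component} - \Pi(\text{component}))$. The key tools are Lemma \ref{TTT-1} (the $L^p$ interpolation estimate for $I_N$, with $p=\infty$), the $L^\infty$-stability of $I_N$, the derivative bounds in Theorem \ref{eq: SS-1}, and for the $\Omega_{11}$ block the $L^2$-projection estimate \eqref{eq:interpolation-theory-1} combined with an inverse inequality to pass from $L^2$ to $L^\infty$ (since the projection error need not be pointwise small a priori, one bounds $\|\partial_x(P_h w - w)\|_{L^\infty(\kappa)}$ by splitting $P_h w - w = (P_h w - \tilde w) + (\tilde w - w)$ against a nicer comparison interpolant, or by using a local inverse estimate on the finite-dimensional part; the mesh-size factors $h_\kappa^{-1}$ and $h_\kappa^{-1/2}$ are absorbed by the powers $N^{-(k+1)}$ coming from \eqref{eq:interpolation-theory-1}).

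First I would treat $\kappa \subset \Omega_{11}$, where $\Pi = P_h$. Write $\partial_x \eta = \partial_x(P_h S - S) + \partial_x(P_h E - E)$. For the smooth part, \eqref{eq:interpolation-theory-1} gives $\|P_h S - S\|_{L^2(\kappa)} \le C h_\kappa^{k+1}|S|_{H^{k+1}(\kappa)} \le C N^{-(k+1)}$, and a local inverse inequality upgrades this to $\|\partial_x(P_h S - S)\|_{L^\infty(\kappa)} \le C N^{-k}$ (one power of $h_\kappa^{-1} \sim N$ for the derivative, one power of $h_\kappa^{-1}$ for $L^2 \to L^\infty$ in 2D, but the exponent bookkeeping still leaves $N^{-k}$ after using $|S|_{H^{k+1}} \le C$ together with the comparison-interpolant trick). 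For the layer part $E$ restricted to $\Omega_{11}$, all three layer components and their derivatives are exponentially small: on $\Omega_{11}$ one has $1-x \ge \lambda_x = \sigma \varepsilon \beta_1^{-1}\ln N$ and/or $1-y\ge\lambda_y$, so $e^{-\beta_1(1-x)/\varepsilon} \le N^{-\sigma}$, etc.; feeding the bounds $|\partial^{i+j}E_1| \le C\varepsilon^{-i}N^{-\sigma}$ into Lemma \ref{TTT-1} and the $L^\infty$-stability estimate yields $\|\partial_x(E - \Pi E)\|_{L^\infty(\kappa)} \le C\varepsilon^{-1}N^{-\sigma}$, whence the stated $C\varepsilon^{-2}N^{-2\sigma} + CN^{-2k}$ after squaring (the $\varepsilon^{-2}$ is generous since $\varepsilon^{-1}N^{-\sigma}$ suffices; the form matches what later estimates need).

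Next, for $\kappa \subset \Omega_{12} \cup \Omega_{21} \cup \Omega_{22}$, where $\Pi = I_N$, I would again split by component. In the $x$-layer direction the mesh is fine, $h_{i,x} \le C\varepsilon N^{-1}\ln N$, while in the $y$-direction it may be coarse, $h_{j,y} \le CN^{-1}$; Lemma \ref{eq:interpolation-theory} (or rather its $L^\infty$ analogue obtained the same way, i.e. differentiating the estimate of Lemma \ref{TTT-1}) gives $\|\partial_x(v - I_N v)\|_{L^\infty(\kappa)} \le C\sum_{l+m=k} h_{i,x}^l h_{j,y}^m \|\partial_x^{l+1}\partial_y^m v\|_{L^\infty(\kappa)}$. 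Applying this to $S$ gives $CN^{-k}$; to $E_1$ (the $x$-layer, with $|\partial_x^{l+1}\partial_y^m E_1| \le C\varepsilon^{-(l+1)}$) the factors $h_{i,x}^{l+1} \sim (\varepsilon N^{-1}\ln N)^{l+1}$ cancel the $\varepsilon^{-(l+1)}$ and leave $C(N^{-1}\ln N)^k \cdot \varepsilon^{-1}$ only in the worst term—but on $\Omega_{21}$, $E_1$ is additionally exponentially small ($1-x \ge \lambda_x$), giving the improved $CN^{-k}$ there and explaining the asymmetry between the $\Omega_{21}$ and $\Omega_{12}\cup\Omega_{22}$ rows; to $E_2$ and $E_3$ one argues symmetrically. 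Collecting the dominant contributions region by region and squaring produces exactly the three cases listed, and the $y$-derivative bounds follow by the mirror-image argument (swapping the roles of $x$ and $y$, hence of $\Omega_{12}$ and $\Omega_{21}$).

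The main obstacle is the $\Omega_{11}$ case: unlike $I_N$, the local $L^2$-projection $P_h$ carries no built-in pointwise or $W^{1,\infty}$ approximation estimate, so one cannot directly differentiate \eqref{eq:interpolation-theory-1}. The clean fix is to interpose the vertices-edges-element interpolant (or any standard $\mathcal{Q}_k$ interpolant) $\tilde u$: write $P_h u - u = (P_h u - \tilde u) + (\tilde u - u)$, bound $\|\tilde u - u\|_{W^{1,\infty}(\kappa)}$ by the usual estimate, and bound the finite-element difference $P_h u - \tilde u \in \mathcal{Q}_k(\kappa)$ in $W^{1,\infty}$ by an inverse inequality from its $L^2$ norm, which is in turn controlled by $\|P_h u - u\|_{L^2} + \|u - \tilde u\|_{L^2} \le C N^{-(k+1)}$. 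One must track the anisotropy of $\kappa$ in $\Omega_{11}$ carefully—there both $h_{i,x}$ and $h_{j,y}$ are $\mathcal{O}(N^{-1})$, so the mesh is essentially quasi-uniform and the inverse inequality costs just $\mathcal{O}(N)$ per derivative, which is why the final bound is $N^{-k}$ rather than something worse.
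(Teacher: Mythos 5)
Your proposal is correct and follows essentially the same route as the paper, whose proof is a one-line appeal to the decomposition of $u$, the anisotropic interpolation estimates for $I_{N}$, and the $L^{2}$-projection bound \eqref{eq:interpolation-theory-1}; you simply supply the details (the $L^{\infty}$ analogues of Lemma \ref{eq:interpolation-theory}, the stability-plus-exponential-smallness argument for layer components outside their layers, and the comparison-interpolant/inverse-inequality device for $P_{h}$ on $\Omega_{11}$) that the paper omits. The only small imprecision is invoking Lemma \ref{TTT-1} and the $L^{\infty}$-stability of $I_{N}$ for the layer part on $\Omega_{11}$, where $\Pi=P_{h}$; there you need the stability of $P_{h}$ (or the inverse-inequality argument you already set up), but this does not affect the stated bounds.
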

\begin{proof}
From the decomposition of $u$, Lemma \ref{eq:interpolation-theory} and \eqref{eq:interpolation-theory-1}, we draw this conclusion without any difficulties.
\end{proof}

\section{Supercloseness}
Next let us introduce $\xi:=\Pi u-u_{N}$ and recall $\eta: =\Pi u-u$.
Then from \eqref{eq:coercity} and the Galerkin orthogonality, 
\begin{equation}\label{eq:uniform-convergence-1}
\begin{split}
\Vert \xi \Vert_{NIPG}^2 &\le B(\xi,\xi)=B(\Pi u-u+u-u_{N},\xi) =B(\eta,\xi)\\
&= \sum_{\kappa\in\mathcal{T}}\varepsilon\int_{\kappa}\nabla \eta\cdot\nabla \xi\mr{d}x\mr{d}y+ \sum_{\kappa\in\mathcal{T}}\int_{\kappa}c\eta\xi\mr{d}x\mr{d}y\\
\quad&-\varepsilon\sum_{e\in\mathcal{E}}\int_{e}\langle\nabla \eta\cdot\nu\rangle[\xi]\mr{d}s+\varepsilon\sum_{e\in\mathcal{E}}\int_{e}[\eta]\langle\nabla \xi\cdot\nu\rangle\mr{d}s+\sum_{e\in\mathcal{E}}\int_{e}\rho_{e}[\eta][\xi]\mr{d}s\\
\quad&+\sum_{\kappa\in\mathcal{T}}\left(\int_{\kappa}(\textbf{b}\cdot \nabla \eta)\xi\mr{d}x\mr{d}y-\int_{\partial_{-}\kappa\cap \Gamma}(\textbf{b}\cdot \textbf{n}_{\kappa})\eta^{+}\xi^{+}\mr{d}s-\int_{\partial_{-}\kappa/ \Gamma}(\textbf{b}\cdot \textbf{n}_{\kappa})\lfloor \eta\rfloor \xi^{+}\mr{d}s\right)\\
\quad& =:\mr{I}+\mr{II}+\mr{III}+\mr{IV}+\mr{V}+\mr{VI}.
\end{split}
\end{equation}

In the following, we will analyze the terms on the right-hand side of \eqref{eq:uniform-convergence-1}. Firstly, decompose $\mr{I}$ into
\begin{equation*}
\begin{aligned}
\mr{I}&=\sum_{\kappa\subset \Omega_{11}\cup \Omega_{22}}\varepsilon\int_{\kappa}\nabla\eta\nabla\xi\mr{d}x\mr{d}y+\sum_{\kappa\subset \Omega_{12}\cup\Omega_{21}}\varepsilon\int_{\kappa}\nabla (S-I_{N}S)\nabla\xi\mr{d}x\mr{d}y\\
&+ \sum_{\kappa\subset \Omega_{12}\cup\Omega_{21}}\varepsilon\int_{\kappa}\nabla (E_{3}-I_{N}E_{3})\nabla\xi\mr{d}x\mr{d}y\\
&+\sum_{\kappa\subset\Omega_{12}}\varepsilon\int_{\kappa}\nabla (E_{2}-I_{N}E_{2})\nabla\xi\mr{d}x\mr{d}y+\sum_{\kappa\subset\Omega_{12}}\varepsilon\int_{\kappa}\nabla (E_{1}-I_{N}E_{1})\nabla\xi\mr{d}x\mr{d}y\\
&+\sum_{\kappa\subset\Omega_{21}}\varepsilon\int_{\kappa}\nabla (E_{1}-I_{N}E_{1})\nabla\xi\mr{d}x\mr{d}y+ \sum_{\kappa\subset\Omega_{21}}\varepsilon\int_{\kappa}\nabla (E_{2}-I_{N}E_{2})\nabla\xi\mr{d}x\mr{d}y\\
&=\mathcal{G}_1+\mathcal{G}_2+\mathcal{G}_3+\mathcal{G}_4+\mathcal{G}_5+\mathcal{G}_6+\mathcal{G}_7.
\end{aligned}
\end{equation*}
For $\kappa\subset\Omega_{11}\cup\Omega_{22}$, from H\"{o}lder inequalities and \eqref{eq:QQ-3}, we obtain 
\begin{equation}\label{eq:convergence-1-1}
\begin{aligned}
|\sum_{\kappa\subset\Omega_{11}\cup\Omega_{22}}\varepsilon \int_{\kappa}\nabla\eta\nabla\xi\mr{d}x\mr{d}y|&\le C\left(\sum_{\kappa\subset\Omega_{11}\cup\Omega_{22}}\varepsilon\Vert\nabla\eta\Vert^{2}_{L^{2}(\kappa)}\right)^{\frac{1}{2}}\left(\sum_{\kappa\subset\Omega_{11}\cup\Omega_{22}}\varepsilon\Vert\nabla\xi\Vert^{2}_{L^{2}(\kappa)}\right)^{\frac{1}{2}}\\
&\le C\left(\varepsilon^{\frac{1}{2}}N^{-k}+N^{-\sigma}+\varepsilon^{\frac{1}{2}}N^{1-\sigma}+C\varepsilon^{\frac{1}{2}}N^{-k}(\ln N)^{k+\frac{1}{2}}\right)\Vert\xi\Vert_{NIPG}\\
&\le C\left(N^{-\sigma}+\varepsilon^{\frac{1}{2}}N^{-k}(\ln N)^{k+\frac{1}{2}}\right)\Vert\xi\Vert_{NIPG}.
\end{aligned}
\end{equation}
Furthermore, for $\mathcal{G}_2$ and $\mathcal{G}_3$, using H\"{o}lder inequalities and \eqref{eq:interpolation-theory}, some direct calculations show that
\begin{equation}\label{eq:convergence-1-2}
\begin{aligned}
|\mathcal{G}_2|&\le C\left(\sum_{\kappa\subset\Omega_{12}\cup\Omega_{21}}\varepsilon\Vert\nabla(S- I_{N}S)\Vert^{2}_{L^{2}(\kappa)}\right)^{\frac{1}{2}}\left(\sum_{\kappa\subset\Omega_{12}\cup\Omega_{21}}\varepsilon\Vert\nabla\xi\Vert^{2}_{L^{2}(\kappa)}\right)^{\frac{1}{2}}\\
&\le C\varepsilon^{\frac{1}{2}}N^{-k}\Vert\xi\Vert_{NIPG}
\end{aligned}
\end{equation}
and from \eqref{eq:QQQ-2} there is
\begin{equation}\label{eq:convergence-6}
\begin{aligned}
|\mathcal{G}_3|&\le C\left(\sum_{\kappa\subset\Omega_{12}\cup\Omega_{21}}\varepsilon\Vert\nabla (E_{3}-I_{N}E_{3})\Vert^{2}_{L^{2}(\kappa)}\right)^{\frac{1}{2}}\left(\sum_{\kappa\subset\Omega_{12}\cup\Omega_{21}}\varepsilon\Vert\nabla\xi\Vert^{2}_{L^{2}(\kappa)}\right)^{\frac{1}{2}}\\
&\le C\left(\varepsilon^{\frac{1}{2}}N^{-\sigma}+N^{1-\sigma}(\ln N)^{-\frac{1}{2}}\right)\Vert\xi\Vert_{NIPG}.
\end{aligned}
\end{equation}
Similarly, for $\mathcal{G}_{4}$ and $\mathcal{G}_{6}$, we can obtain that
\begin{equation}\label{RR-2}
\begin{aligned}
&|\sum_{\kappa\subset \Omega_{12}}\varepsilon\int_{\kappa}\nabla (E_{2}-I_{N}E_{2})\nabla\xi\mr{d}x\mr{d}y|+|\sum_{\kappa\subset \Omega_{21}}\varepsilon\int_{\kappa}\nabla (E_{1}-I_{N}E_{1})\nabla\xi\mr{d}x\mr{d}y|\\
&\le C\left[\left(\sum_{\kappa\subset \Omega_{12}}\varepsilon\Vert\nabla(E_{2}-I_{N}E_{2})\Vert^{2}_{L^{2}(\kappa)}\right)^{\frac{1}{2}}+\left(\sum_{\kappa\subset \Omega_{21}}\varepsilon\Vert\nabla(E_{1}-I_{N}E_{1})\Vert^{2}_{L^{2}(\kappa)}\right)^{\frac{1}{2}}\right]\Vert\xi\Vert_{NIPG}\\
&\le CN^{1-\sigma}(\ln N)^{-\frac{1}{2}}\Vert\xi\Vert_{NIPG},
\end{aligned}
\end{equation}
where \eqref{eq:QQQ-1} has been applied. In particular, through Lemma \ref{RR-1} and Theorem \ref{eq: SS-1}, the following estimate can be derived,
\begin{equation}\label{eq:convergence-1-3}
\begin{aligned}
&\vert\sum_{\kappa\subset\Omega_{12}}\varepsilon\int_{\kappa}\nabla (E_{1}-I_{N}E_{1})\nabla\xi\mr{d}x\mr{d}y\vert\\
&\le C\sum_{\kappa\subset\Omega_{12}}\varepsilon\int_{\kappa}(E_{1}-I_{N}E_{1})_{x}\frac{\partial\xi}{\partial x}\mr{d}x\mr{d}y+ C\sum_{\kappa\subset\Omega_{12}}\varepsilon\int_{\kappa}(E_{1}-I_{N}E_{1})_{y}\frac{\partial\xi}{\partial y}\mr{d}x\mr{d}y\\
&\le C\sum_{\kappa\subset\Omega_{12}}\varepsilon h_{j, y}^{k+1}\left\Vert \frac{\partial^{k+2}E_{1}}{\partial x\partial y^{k+1}}\right\Vert_{L^{2}(\kappa)}\left\Vert\frac{\partial\xi}{\partial x}\right\Vert_{L^{2}(\kappa)}+ C\sum_{\kappa\subset\Omega_{12}}\varepsilon h_{i, x}^{k+1}\left\Vert\frac{\partial^{k+2}E_{1}}{\partial x^{k+1}\partial y}\right\Vert_{L^{2}(\kappa)}\left\Vert\frac{\partial \xi}{\partial y}\right\Vert_{L^{2}(\kappa)}\\
&\le C\sum_{\kappa\subset\Omega_{12}}N^{-(k+1)}\Vert e^{-\beta_{1}(1-x)/\varepsilon}\Vert_{L^{2}(\kappa)}\left\Vert\frac{\partial\xi}{\partial x}\right\Vert_{L^{2}(\kappa)}+  C\sum_{\kappa\subset\Omega_{12}} \varepsilon (N^{-1}\ln N)^{k+1}\Vert e^{-\beta_{1}(1-x)/\varepsilon}\Vert_{L^{2}(\kappa)}\left\Vert\frac{\partial\xi}{\partial y}\right\Vert_{L^{2}(\kappa)}\\
&\le CN^{-(k+1)}\left(\sum_{\kappa\subset\Omega_{12}}\varepsilon\left\Vert\frac{\partial \xi}{\partial x}\right\Vert^{2}_{L^{2}(\kappa)}\right)^{\frac{1}{2}}+\varepsilon (N^{-1}\ln N)^{k+1}\left(\sum_{\kappa\subset\Omega_{12}}\varepsilon\left\Vert\frac{\partial \xi}{\partial y}\right\Vert^{2}_{L^{2}(\kappa)}\right)^{\frac{1}{2}}\\
&\le C\left(N^{-(k+1)}+\varepsilon (N^{-1}\ln N)^{k+1}\right)\Vert\xi\Vert_{NIPG}.
\end{aligned}
\end{equation}

Using the similar method, it is straightforward to see that
\begin{equation}\label{RR-3}
\begin{aligned}
\vert\sum_{\kappa\subset\Omega_{21}}\varepsilon\int_{\kappa}\nabla (E_{2}-I_{N}E_{2})\nabla\xi\mr{d}x\mr{d}y\vert
\le C\left(N^{-(k+1)}+\varepsilon (N^{-1}\ln N)^{k+1}\right)\Vert\xi\Vert_{NIPG}.
\end{aligned}
\end{equation}
Combining \eqref{eq:convergence-1-1}, \eqref{eq:convergence-1-2}, \eqref{eq:convergence-6}, \eqref{RR-2}, \eqref{eq:convergence-1-3} and \eqref{RR-3}, one derives
\begin{equation}\label{eq:convergence-1}
\mr{I}\le C\left(\varepsilon^{\frac{1}{2}}N^{-k}(\ln N)^{k+\frac{1}{2}}+N^{-(k+1)}+N^{1-\sigma}(\ln N)^{-\frac{1}{2}}\right)\Vert\xi\Vert_{NIPG}.
\end{equation}

For $\mr{II}$, according to \eqref{eq:QQ-1} and \eqref{eq:QQ-2}, we have
\begin{equation}\label{BB-1}
\begin{aligned}
\vert \sum_{\kappa\in\mathcal{T}}\int_{\kappa}c\eta\xi\mr{d}x\mr{d}y\vert&\le C\left(\sum_{\kappa\in\mathcal{T}}\Vert\eta\Vert^{2}_{L^{2}(\kappa)}\right)^{\frac{1}{2}}\left(\sum_{\kappa\in \mathcal{T}}\Vert\xi\Vert^{2}_{L^{2}(\kappa)}\right)^{\frac{1}{2}}\\
&\le C\left(N^{-(k+1)}+\varepsilon^{\frac{1}{2}}N^{-(k+1)}(\ln N)^{k+1}\right)\Vert\xi\Vert_{NIPG}.
\end{aligned}
\end{equation}

Then, we will analyze the estimate of $\mr{III}$. Before that, let us first introduce an element numbering in $\mathcal{T}$ in the following way: the first element in the mesh is on the bottom-left position and the numbering is from the bottom to the top going from the left to the right, as depicted in Figure \ref{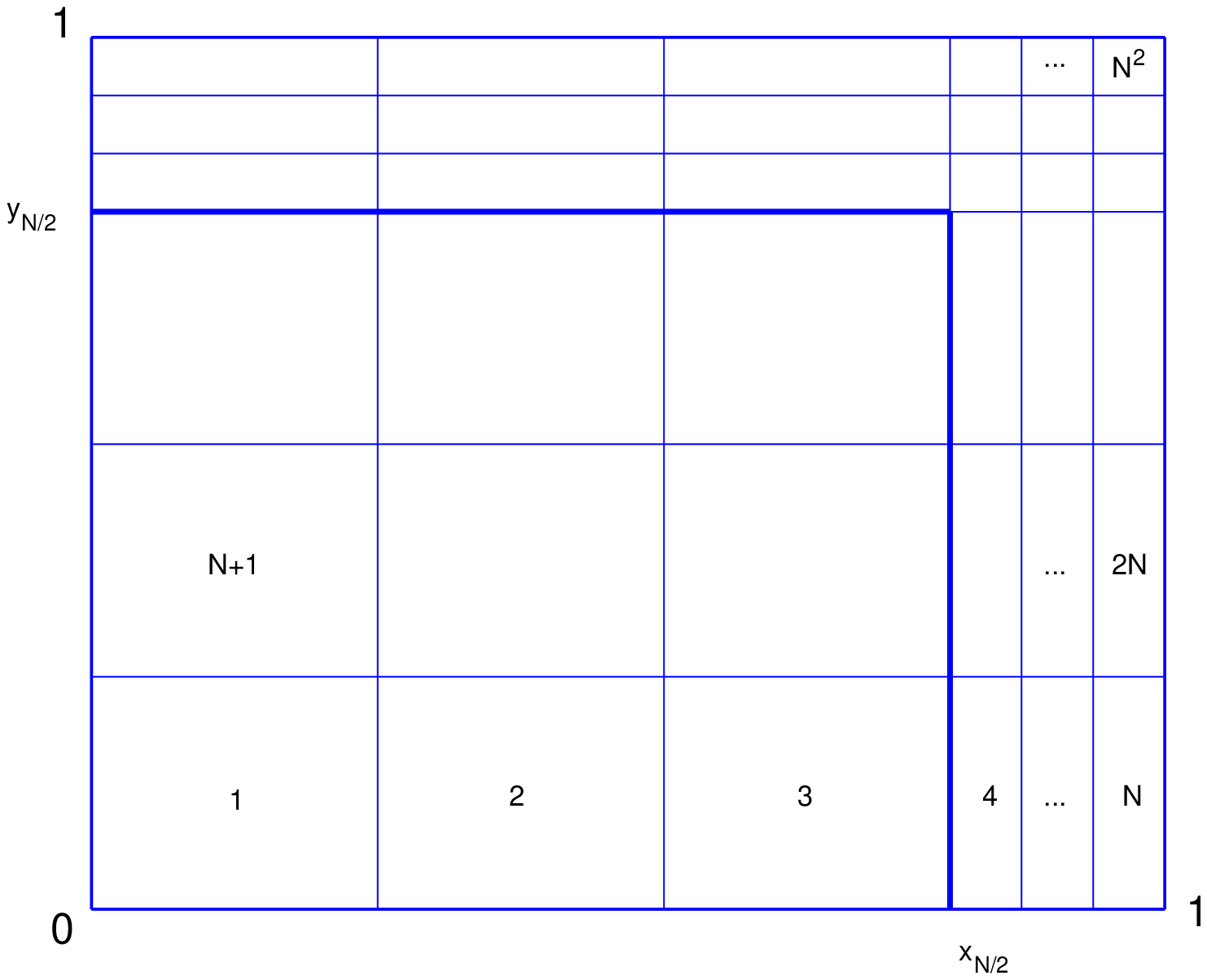}.\\
\vspace{-0.6cm}
\begin{figure}[H]
\begin{center}
\includegraphics[width=0.8\textwidth]{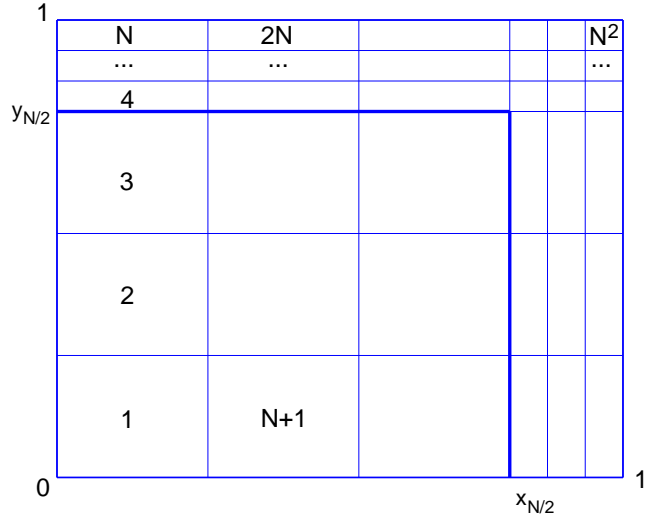}
\vspace{-0.9cm}
\caption{The element numbering in the partitioning $\mathcal{T}$}\label{Fig1.eps}
\end{center}
\end{figure}
\vspace{-0.5cm}
According to the Cauchy-Schwarz inequality, there is
\begin{equation*}
\begin{aligned}
&|\sum_{e\in\mathcal{E}}\int_{e}\varepsilon\langle\nabla\eta\cdot\nu\rangle[\xi]\mr{d}s|\\
&\le C\left(\sum_{e\in\mathcal{E}}\int_{e}\frac{\varepsilon^{2}}{\rho_{e}}\langle\nabla\eta\cdot\nu\rangle^{2}\mr{d}s\right)^{\frac{1}{2}}\left(\sum_{e\in\mathcal{E}}\int_{e}\rho_{e}[\xi]^{2}\mr{d}s\right)^{\frac{1}{2}}\\
&\le C\left(\sum_{e\in\mathcal{E}}\int_{e}\frac{\varepsilon^{2}}{\rho_{e}}\langle\nabla\eta\cdot\nu\rangle^{2}\mr{d}s\right)^{\frac{1}{2}}\Vert\xi\Vert_{NIPG},
\end{aligned}
\end{equation*} 
where for $e\in \mathcal{E}_{int}$, the vector $\nu$ is $\nu = (0, -1)$ for the horizontal edges and $\nu = (-1, 0)$ for the vertical edges, while for $e\in \mathcal{E}\cap\Gamma$, $\nu$ denotes the unit outward normal vector $\textbf{n}_{\kappa}$ on $\Gamma$. Through the definition of $\langle\cdot\rangle$, Lemma \ref{Ppp-1} and  the values of $\rho_{e}$ \eqref{eq: penalization parameters}, we analyze $\mr{III}$ in the following four cases.
\begin{itemize}
\item If e is type $\mathcal{M}_{1}$, recalling $\rho_{e}=1$ and we have
\begin{equation*}
\begin{aligned}
\sum_{e\in\mathcal{E}}\int_{e}\frac{\varepsilon^{2}}{\rho_{e}}\langle\nabla\eta\cdot\nu\rangle^{2}\mr{d}s
&\le C\left(\left\Vert\frac{\partial\eta}{\partial x}\right\Vert^{2}_{L^{\infty}(\Omega_{11})}+\left\Vert\frac{\partial\eta}{\partial y}\right\Vert^{2}_{L^{\infty}(\Omega_{11})}\right)\frac{\varepsilon^{2}}{\rho_{e}}\sum_{e\in\mathcal{E}}\int_{e}\mr{d}s\\
&\le C\left(\varepsilon^{-2}N^{-2\sigma}+N^{-2k}\right)\varepsilon^{2}N^{2}N^{-1}\\
&\le CN^{1-2\sigma}+C\varepsilon^{2}N^{1-2k}.
\end{aligned}
\end{equation*}

\item If e is type $\mathcal{M}_{2}$, we have $\rho_{e}=N^{2}$ and
\begin{equation*}
\begin{aligned}
\sum_{e\in\mathcal{E}}\int_{e}\frac{\varepsilon^{2}}{\rho_{e}}\langle\nabla\eta\cdot\nu\rangle^{2}\mr{d}s
&\le C\left(\left\Vert\frac{\partial\eta}{\partial y}\right\Vert^{2}_{L^{\infty}(\Omega_{21})}+\left\Vert\frac{\partial\eta}{\partial x}\right\Vert^{2}_{L^{\infty}(\Omega_{12})}\right)\frac{\varepsilon^{2}}{\rho_{e}}\sum_{e\in\mathcal{E}}\int_{e}\mr{d}s\\
&\le C\varepsilon^{-2}N^{-2k}(\ln N)^{2k}\frac{\varepsilon^{2}}{N^{2}}N^{2}N^{-1}\\
&\le CN^{-(2k+1)}(\ln N)^{2k}.
\end{aligned}
\end{equation*}

\item If e is of type $\mathcal{M}_{3}$, there is $\rho_{e}=N$ and then
\begin{equation*}
\begin{aligned}
\sum_{e\in\mathcal{E}}\int_{e}\frac{\varepsilon^{2}}{\rho_{e}}\langle\nabla\eta\cdot\nu\rangle^{2}\mr{d}s
&\le C\left(\left\Vert\frac{\partial\eta}{\partial y}\right\Vert^{2}_{L^{\infty}(\Omega_{12}\cup\Omega_{22})}+\left\Vert\frac{\partial\eta}{\partial x}\right\Vert^{2}_{L^{\infty}(\Omega_{21}\cup\Omega_{22})}\right)\frac{\varepsilon^{2}}{\rho_{e}}\sum_{e\in\mathcal{E}}\int_{e}\mr{d}s\\
&\le C\left(\varepsilon^{-2}N^{-2\sigma}+N^{-2k}(\ln N)^{2k}+\varepsilon^{-2}N^{-2k}(\ln N)^{2k}\right)\frac{\varepsilon^{2}}{N}\varepsilon N\ln N\\
&\le C\varepsilon N^{-2k}(\ln N)^{2k+1}.
\end{aligned}
\end{equation*}

\item If e is of type $\mathcal{M}_{4}$, recalling $\rho_{e}=N$ and there is
\begin{equation*}
\begin{aligned}
\sum_{e\in\mathcal{E}}\int_{e}\frac{\varepsilon^{2}}{\rho_{e}}\langle\nabla\eta\cdot\nu\rangle^{2}\mr{d}s
&\le C\left(\left\Vert\frac{\partial\eta}{\partial y}\right\Vert^{2}_{L^{\infty}(\Omega_{11}\cup\Omega_{21})}+\left\Vert\frac{\partial\eta}{\partial x}\right\Vert^{2}_{L^{\infty}(\Omega_{11}\cup\Omega_{12})}\right)\frac{\varepsilon^{2}}{\rho_{e}}\sum_{e\in\mathcal{E}}\int_{e}\mr{d}s\\
&\le C\left(\varepsilon^{-2}N^{-2\sigma}+N^{-2k}+\varepsilon^{-2}N^{-2k}(\ln N)^{2k}\right)\frac{\varepsilon^{2}}{N}N N^{-1}\\
&\le CN^{-(2k+1)}(\ln N)^{2k}.
\end{aligned}
\end{equation*}
\end{itemize}
So far, combined with the above derivation, it is straightforward to get
\begin{equation}\label{DDD-2}
\mr{III}\le C\left(N^{-(k+\frac{1}{2})}(\ln N)^{k}+\varepsilon^{\frac{1}{2}}N^{-k}(\ln N)^{k+\frac{1}{2}}\right)\Vert\xi\Vert_{NIPG}.
\end{equation}

For $\mr{IV}$, we divide it into two parts: on $\overline{\Omega}_{11}$ and $\overline{\Omega/\Omega_{11}}$.
Then according to \eqref{eq:H-1} and the properties of vertices-edges-element interpolation, we just analyze this term on $\overline{\Omega}_{11}$. For $e\subset \Omega^{*}:=[0, 1-\lambda_{x})\times[0, 1-\lambda_{y})$, from the Cauchy–Schwarz inequality and inverse inequality
$$\int_{e}\langle\nabla\xi\cdot\nu\rangle^{2}\mr{d}s\le h_{e}^{-1}\int_{\kappa}(\nabla\xi)^{2}\mr{d}x\mr{d}y,\quad \xi\in V_{N}^{k},$$
 one has
\begin{equation*}
\begin{aligned}
&|\sum_{e\subset \Omega^{*}}\varepsilon\int_{e}[\eta]\langle \nabla\xi\cdot\nu\rangle\mr{d}s|\le C\sum_{e\subset \Omega^{*}}\varepsilon\Vert\eta\Vert_{L^{\infty}(\Omega_{11})}\int_{e}\langle\nabla\xi\cdot\nu\rangle\mr{d}s\\
&\le C\varepsilon N^{-(k+1)}\left(\sum_{e\subset \Omega^{*}}\int_{e}1^{2}\mr{d}s\right)^{\frac{1}{2}}\left(\sum_{e\subset \Omega^{*}}\int_{e}\langle\nabla\xi\cdot\nu\rangle^{2}\mr{d}s\right)^{\frac{1}{2}}\\
&\le C\varepsilon N^{-(k+1)}N^{\frac{1}{2}}\left(\sum_{e\subset \Omega^{*}}\int_{e}\langle\nabla\xi\cdot\nu\rangle^{2}\mr{d}s\right)^{\frac{1}{2}}\\
&\le C\varepsilon N^{-(k+1)}N^{\frac{1}{2}}h_{e}^{-\frac{1}{2}}\left(\sum_{\kappa\subset\Omega^{*}}\int_{\kappa}(\nabla\xi)^{2}\mr{d}x\mr{d}y\right)^{\frac{1}{2}}\\
&\le C\varepsilon^{\frac{1}{2}} N^{-(k+\frac{1}{2})}N^{\frac{1}{2}}\left(\sum_{\kappa\subset\Omega^{*}}\varepsilon\int_{\kappa}(\nabla\xi)^{2}\mr{d}x\mr{d}y\right)^{\frac{1}{2}}\\
&\le C\varepsilon^{\frac{1}{2}} N^{-k}\Vert\xi\Vert_{NIPG},
\end{aligned}
\end{equation*}
here  $h_{e}$ represents the minimum length of all edges $e$ in $\kappa\subset\Omega_{11}$.  
Furthermore, for $e\subset\partial\Omega_{11}/\Gamma$, using the similar method and \eqref{eq:QQ-6}, 
\begin{equation*}
\begin{aligned}
&|\sum_{e\subset \partial\Omega_{11}/\Gamma}\varepsilon\int_{e}[\eta]\langle \nabla\xi\cdot\nu\rangle\mr{d}s|\le C\sum_{e\subset \partial\Omega_{11}/\Gamma}\varepsilon\Vert\eta\Vert_{L^{\infty}(\Omega/\Omega_{22})}\int_{e}\langle\nabla\xi\cdot\nu\rangle\mr{d}s\\
&\le C\varepsilon N^{-(k+1)}(\ln N)^{k+1}\left(\sum_{e\subset \partial\Omega_{11}/\Gamma}\int_{e}1^{2}\mr{d}s\right)^{\frac{1}{2}}\left(\sum_{e\subset\partial\Omega_{11}/\Gamma}\int_{e}\langle\nabla\xi\cdot\nu\rangle^{2}\mr{d}s\right)^{\frac{1}{2}}\\
&\le C\varepsilon N^{-(k+1)}(\ln N)^{k+1}h_{e}^{-\frac{1}{2}}\left(\sum_{\kappa\subset\Omega/\Omega_{22}}\int_{\kappa}(\nabla\xi)^{2}\mr{d}x\mr{d}y\right)^{\frac{1}{2}}\\
&\le C\varepsilon N^{-(k+1)}(\ln N)^{k+1}(\varepsilon N^{-1}\ln N)^{-\frac{1}{2}}\left(\sum_{\kappa\subset\Omega/\Omega_{22}}\int_{\kappa}(\nabla\xi)^{2}\mr{d}x\mr{d}y\right)^{\frac{1}{2}}\\
&\le C N^{-(k+\frac{1}{2})}(\ln N)^{k+\frac{1}{2}}\left(\sum_{\kappa\subset\Omega/\Omega_{22}}\varepsilon\int_{\kappa}(\nabla\xi)^{2}\mr{d}x\mr{d}y\right)^{\frac{1}{2}}\\
&\le C N^{-(k+\frac{1}{2})}(\ln N)^{k+\frac{1}{2}}\Vert\xi\Vert_{NIPG}.
\end{aligned}
\end{equation*}
where  $h_{e}$ represents the minimum length of all edges in $\kappa\subset\Omega/\Omega_{22}$. 
Therefore, we can derive 
\begin{equation}\label{DDD-1}
|\mr{IV}|\le C N^{-(k+\frac{1}{2})}(\ln N)^{k+\frac{1}{2}}\Vert\xi\Vert_{NIPG}.
\end{equation}

Then for $\mr{V}$, using the nature of vertices-edges-element interpolation on $\overline{\Omega/\Omega_{11}}$, we just analyze it on $\overline{\Omega}_{11}$. Through \eqref{eq: penalization parameters}, \eqref{eq:QQ-5} and $\rho_{e}=1$ for $e\subset\Omega^{*}$, 
\begin{equation*}\label{convergence-1-4}
\begin{aligned}
|\sum_{e\subset\Omega^{*}}\int_{e}\rho_{e}[\eta][\xi]\mr{d}s|&\le C\left(\sum_{e\subset\Omega^{*}}\int_{e}\rho_{e}[\eta]^{2}\mr{d}s\right)^{\frac{1}{2}}\left(\sum_{e\subset\Omega^{*}}\int_{e}\rho_{e}[\xi]^{2}\mr{d}s\right)^{\frac{1}{2}}\\
&\le C\left(\sum_{e\subset\Omega^{*}}\int_{e}\rho_{e}[\eta]^{2}\mr{d}s\right)^{\frac{1}{2}}\Vert\xi\Vert_{NIPG}\\
&\le C\left(\Vert\eta\Vert^{2}_{L^{\infty}(\Omega_{11})}N^{2}N^{-1}\right)^{\frac{1}{2}}\Vert\xi\Vert_{NIPG}\\
&\le CN^{-(k+\frac{1}{2})}\Vert\xi\Vert_{NIPG}.
\end{aligned}
\end{equation*}
Furthermore, for $e\subset\partial\overline{\Omega}_{11}/\Gamma$, one also obtains
\begin{equation*}
\begin{aligned}
 |\sum_{e\subset\partial\overline{\Omega}_{11}/\Gamma}\int_{e}\rho_{e}[\eta][\xi]\mr{d}s|
&\le C\left(\sum_{e\subset\partial\overline{\Omega}_{11}/\Gamma}\int_{e}\rho_{e}[\eta]^{2}\mr{d}s\right)^{\frac{1}{2}}\Vert\xi\Vert_{NIPG}\\
&\le C\left(N\Vert\eta\Vert^{2}_{L^{\infty}(\Omega/\Omega_{22})}\right)^{\frac{1}{2}}\Vert\xi\Vert_{NIPG}\\
&\le CN^{-(k+\frac{1}{2})}(\ln N)^{k+1}\Vert\xi\Vert_{NIPG},
\end{aligned}
\end{equation*}
where $\rho_{e}=N$ for $e\subset\partial\overline{\Omega}_{11}/\Gamma$ have been used. So far, there is
\begin{equation}\label{convergence-1-4}
|\mr{V}|\le CN^{-(k+\frac{1}{2})}(\ln N)^{k+1}\Vert\xi\Vert_{NIPG}.
\end{equation}

Finally, we consider $\mr{VI}$.  Applying integration by parts, we have
\begin{equation}\label{UU-1}
\begin{aligned}
&\sum_{\kappa\in\mathcal{T}}\left(\int_{\kappa}(\textbf{b}\cdot\nabla\eta)\xi\mr{d}x\mr{d}y-\int_{\partial_{-}\kappa\cap\Gamma}(\textbf{b}\cdot\textbf{n}_{\kappa})\eta^{+}\xi^{+}\mr{d}s-\int_{\partial_{-}\kappa/\Gamma}(\textbf{b}\cdot\textbf{n}_{\kappa})\lfloor \eta\rfloor\xi^{+}\mr{d}s\right)\\
&=-\sum_{\kappa\in\mathcal{T}}\int_{\kappa}\eta(\textbf{b}\cdot\nabla\xi)\mr{d}x\mr{d}y+2\sum_{\kappa\in\mathcal{T}}\int_{\kappa}(c_{0}^{2}-c)\eta\xi\mr{d}x\mr{d}y+\sum_{\kappa\in\mathcal{T}}\int_{\partial_{+}\kappa\cap\Gamma}(\textbf{b}\cdot\textbf{n}_{\kappa})\eta^{+}\xi^{+}\mr{d}s\\
&+\sum_{\kappa\in\mathcal{T}}\left(\int_{\partial_{+}\kappa/\Gamma}(\textbf{b}\cdot\textbf{n}_{\kappa})\eta^{+}\xi^{+}\mr{d}s+\int_{\partial_{-}\kappa/\Gamma}(\textbf{b}\cdot\textbf{n}_{\kappa})\eta^{-}\xi^{+}\mr{d}s\right)\\
&=\mathcal{R}_{1}+\mathcal{R}_{2}+\mathcal{R}_{3}+\mathcal{R}_{4},
\end{aligned}
\end{equation}
where the function $c_{0}^{2}$ is defined in \eqref{eq:SPP-condition-1}. Then for $\mathcal{R}_{2}$, through the Cauchy–Schwarz inequality one has
\begin{equation}\label{DDD-3}
\begin{aligned}
|\sum_{\kappa\in\mathcal{T}}\int_{\kappa}(c_{0}^{2}-c)\eta\xi\mr{d}x\mr{d}y|&\le C\left(\sum_{\kappa\in\mathcal{T}}\Vert\eta\Vert_{L^{2}(\kappa)}^{2}\right)^{\frac{1}{2}}\Vert\xi\Vert_{NIPG}\\
&\le C\left(N^{-(k+1)}+\varepsilon^{\frac{1}{2}}N^{-(k+1)}(\ln N)^{k+1}\right)\Vert\xi\Vert_{NIPG}.
\end{aligned}
\end{equation}
For $\mathcal{R}_{3}$, using \eqref{eq:QQ-5} and \eqref{eq:QQ-6}, we have
\begin{equation}\label{DDD-4}
\begin{aligned}
&|\sum_{\kappa\in\mathcal{T}}\int_{\partial_{+}\kappa\cap\Gamma}(\textbf{b}\cdot\textbf{n}_{\kappa})\eta^{+}\xi^{+}\mr{d}s|\\
&\le C\sum_{\kappa\in\mathcal{T}}\left(\int_{\partial_{+}\kappa\cap\Gamma}(\textbf{b}\cdot\textbf{n}_{\kappa})(\eta^{+})^{2}\mr{d}s\right)^{\frac{1}{2}}\left(\int_{\partial_{+}\kappa\cap\Gamma}(\textbf{b}\cdot\textbf{n}_{\kappa})(\xi^{+})^{2}\mr{d}s\right)^{\frac{1}{2}}\\
&\le C\left(\sum_{\kappa\in\mathcal{T}}\int_{\partial_{+}\kappa\cap\Gamma}(\textbf{b}\cdot\textbf{n}_{\kappa})(\eta^{+})^{2}\mr{d}s\right)^{\frac{1}{2}}\left(\sum_{\kappa\in \mathcal{T}}\int_{\partial_{+}\kappa\cap\Gamma}(\textbf{b}\cdot\textbf{n}_{\kappa})(\xi^{+})^{2}\mr{d}s\right)^{\frac{1}{2}}\\
&\le C\left(\sum_{\kappa\in\mathcal{T}}\int_{\partial_{+}\kappa\cap\Gamma}(\textbf{b}\cdot\textbf{n}_{\kappa})(\eta^{+})^{2}\mr{d}s\right)^{\frac{1}{2}}\Vert\xi\Vert_{NIPG}\\
&\le C\left(\Vert\eta\Vert_{L^{\infty}(\Omega)}^{2}N N^{-1}\right)^{\frac{1}{2}}\Vert\xi\Vert_{NIPG}\\
&\le CN^{-(k+1)}(\ln N)^{k+1}\Vert\xi\Vert_{NIPG}.
\end{aligned}
\end{equation}
Furthermore, the last two integrals $\mathcal{R}_{4}$ in \eqref{UU-1} can be estimated as in \cite{Hou1Sch2Sui3:2002-D}, that is
\begin{equation}\label{DDD-5}
\begin{aligned}
&|\sum_{\kappa\in\mathcal{T}}\left(\int_{\partial_{+}\kappa/\Gamma}(\textbf{b}\cdot\textbf{n}_{\kappa})\eta^{+}\xi^{+}\mr{d}s+\int_{\partial_{-}\kappa/\Gamma}(\text{b}\cdot\textbf{n}_{\kappa})\eta^{-}\xi^{+}\mr{d}s\right)|\\
&\le C\left(\sum_{\kappa\in\mathcal{T}}\int_{\partial_{-}\kappa/\Gamma}(\textbf{b}\cdot\textbf{n}_{\kappa})(\eta^{-})^{2}\mr{d}s\right)^{\frac{1}{2}}\left(\sum_{\kappa\in\mathcal{T}}\int_{\partial_{-}\kappa/\Gamma}(\textbf{b}\cdot\textbf{n}_{\kappa})(\xi^{+}-\xi^{-})^{2}\mr{d}s\right)^{\frac{1}{2}}\\
&\le C\left(\Vert\eta\Vert^{2}_{L^{\infty}(\Omega)}N^{2} N^{-1}\right)^{\frac{1}{2}}\Vert\xi\Vert_{NIPG}\\
&\le CN^{-(k+\frac{1}{2})}(\ln N)^{k+1}\Vert\xi\Vert_{NIPG}.
\end{aligned}
\end{equation}

Below, let us estimate $\mathcal{R}_{1}$. On the one hand, for $\kappa\subset\Omega_{11}$, let $\overline{\textbf{b}} = (\overline{b}_{1}, \overline{b}_{2})$ denote a local piecewise constant approximation of $\textbf{b} = (b_{1}, b_{2})$. Then, we can derive
\begin{equation*}
\begin{aligned}
\sum_{\kappa\subset\Omega_{11}}\int_{\kappa}(\textbf{b}\cdot\nabla\xi)\eta\mr{d}x\mr{d}y=\sum_{\kappa\subset\Omega_{11}}\int_{\kappa}(\textbf{b}-\overline{\textbf{b}})\cdot\nabla\xi\eta\mr{d}x\mr{d}y,
\end{aligned}
\end{equation*}
which follows from the definition of the local $L^{2}$-projection operator \eqref{WW-1}. Therefore, 
\begin{equation}\label{DDD-6}
\begin{aligned}
|\sum_{\kappa\subset\Omega_{11}}\int_{\kappa}(\textbf{b}-\overline{\textbf{b}})\cdot\nabla\xi\eta\mr{d}x\mr{d}y|
&\le C\sum_{\kappa\subset{\Omega_{11}}}N^{-1}\Vert\eta\Vert_{L^{2}(\kappa)}\Vert\nabla\xi\Vert_{L^{2}(\kappa)}\\
&\le C\sum_{\kappa\subset{\Omega_{11}}}\Vert\eta\Vert_{L^{2}(\kappa)}\Vert\xi\Vert_{L^{2}(\kappa)}\\
&\le C\left(\sum_{\kappa\subset{\Omega_{11}}}\Vert\eta\Vert_{L^{2}(\kappa)}^{2}\right)^{\frac{1}{2}}\Vert\xi\Vert_{NIPG}\\
&\le CN^{-(k+1)}\Vert\xi\Vert_{NIPG},
\end{aligned}
\end{equation}
where $\textbf{b}$ is sufficiently smooth, then the Cauchy-Schwarz inequality and the local inverse inequality \citep[Theorem 3.2.6]{Cia1:2002-motified} for $\xi\in V_{N}^{k}$ have been applied. 

On the other hand, for $\kappa\subset\Omega/\Omega_{11}$, from H\"{o}lder inequalities and \eqref{eq:QQ-2},
\begin{equation}\label{DDD-7}
\begin{aligned}
&|\sum_{\kappa\subset\Omega/\Omega_{11}}\int_{\kappa}\eta(\textbf{b}\cdot\nabla\xi)\mr{d}x\mr{d}y|\\
&\le C\left(\sum_{\kappa\subset\Omega/\Omega_{11}}\Vert\eta\Vert^{2}_{L^{2}(\kappa)}\right)^{\frac{1}{2}}\left(\sum_{\kappa\subset\Omega/\Omega_{11}}\Vert\nabla\xi\Vert^{2}_{L^{2}(\kappa)}\right)^{\frac{1}{2}}\\
&\le C\left(\varepsilon N^{-2(k+1)}(\ln N)^{2(k+1)}\right)^{\frac{1}{2}}\left(\sum_{\kappa\subset\Omega/\Omega_{11}}\Vert\nabla\xi\Vert^{2}_{L^{2}(\kappa)}\right)^{\frac{1}{2}}\\
&\le CN^{-(k+1)}(\ln N)^{k+1}\Vert\xi\Vert_{NIPG}.
\end{aligned}
\end{equation}
Substituting \eqref{DDD-3}-\eqref{DDD-7} into \eqref{UU-1}, we can derive the following estimate directly, that is
\begin{equation}\label{convergence-5}
\mr{VI}\le CN^{-(k+\frac{1}{2})}(\ln N)^{k+1}\Vert\xi\Vert_{NIPG}.
\end{equation}

Last but not least, by \eqref{eq:convergence-1}, \eqref{BB-1}, \eqref{DDD-2}, \eqref{DDD-1}, \eqref{convergence-1-4} and \eqref{convergence-5}, there is
\begin{equation*}
\begin{aligned}
\Vert\xi\Vert^{2}_{NIPG}&\le\mr{I}+\mr{II}+\mr{III}+\mr{IV}+\mr{V}+\mr{VI}
\le CN^{-(k+\frac{1}{2})}(\ln N)^{k+1} \Vert\xi\Vert_{NIPG}.
\end{aligned}
\end{equation*}

Now we give the main conclusion of supercloseness.

\begin{theorem}\label{the:main result1}
Let Assumption \ref{ass:S-1} hold. Suppose that $\rho_{e}$ for $e\in\mathcal{E}$ are defined as \eqref{eq: penalization parameters} and the definition of $I_{N}u$ is presented in Section 4.1.
Then on the Shishkin mesh with $\sigma\ge k+\frac{3}{2}$ we derive
\begin{align*}
\Vert I_{N}u-u_{h}\Vert_{NIPG}+\Vert \Pi u-u_{h} \Vert_{NIPG}\le CN^{-(k+\frac{1}{2})}(\ln N)^{k+1},
\end{align*}
where $\Pi u$ is the interpolation of the exact solution to \eqref{eq:S-1}, and $u_{h}$ is the solution of \eqref{eq:SD}. 
\end{theorem}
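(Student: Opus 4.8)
The plan is to extract the result from the work already done on $\xi := \Pi u - u_h$ together with the interpolation estimates in Lemma \ref{Interpolation error}. Collecting \eqref{eq:convergence-1}, \eqref{BB-1}, \eqref{DDD-2}, \eqref{DDD-1}, \eqref{convergence-1-4} and \eqref{convergence-5} in \eqref{eq:uniform-convergence-1} gives $\Vert\xi\Vert_{NIPG}^{2}\le CN^{-(k+\frac12)}(\ln N)^{k+1}\,\Vert\xi\Vert_{NIPG}$; dividing by $\Vert\xi\Vert_{NIPG}$ (the case $\xi=0$ being trivial) yields
$$\Vert \Pi u - u_h\Vert_{NIPG}\le CN^{-(k+\frac12)}(\ln N)^{k+1},$$
which is precisely the second summand of the claimed bound.

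For the first summand I would insert $\Pi u$ and use the triangle inequality, $\Vert I_{N}u-u_h\Vert_{NIPG}\le \Vert I_{N}u-\Pi u\Vert_{NIPG}+\Vert \Pi u-u_h\Vert_{NIPG}$. By the definition \eqref{eq:H-1}, $I_{N}u-\Pi u$ vanishes on every element of $\overline{\Omega/\Omega_{11}}$ and coincides with $I_{N}u-P_{h}u$ on $\overline{\Omega}_{11}$, so that $\Vert I_{N}u-\Pi u\Vert_{NIPG}=\Vert I_{N}u-P_{h}u\Vert_{NIPG,\overline{\Omega}_{11}}$. A further triangle inequality gives $\Vert I_{N}u-P_{h}u\Vert_{NIPG,\overline{\Omega}_{11}}\le \Vert I_{N}u-u\Vert_{NIPG,\overline{\Omega}_{11}}+\Vert u-P_{h}u\Vert_{NIPG,\overline{\Omega}_{11}}$, and the two terms on the right are controlled by \eqref{post-process-1} and \eqref{post-process-2} respectively.

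It then remains only to check orders. The bound \eqref{post-process-2} is already $CN^{-(k+\frac12)}(\ln N)^{k+1}$. In \eqref{post-process-1}, Assumption \ref{ass:S-1} ($\varepsilon\le CN^{-1}$) turns $\varepsilon^{\frac12}N^{-k}$ into $CN^{-(k+\frac12)}$, and $\sigma\ge k+\frac32$ turns $\varepsilon^{\frac12}N^{1-\sigma}$ into $CN^{\frac12-\sigma}\le CN^{-(k+1)}$, so the whole right-hand side of \eqref{post-process-1} is $\le CN^{-(k+\frac12)}$; hence $\Vert I_{N}u-\Pi u\Vert_{NIPG}\le CN^{-(k+\frac12)}(\ln N)^{k+1}$. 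Adding this to the bound for $\Vert \Pi u - u_h\Vert_{NIPG}$ finishes the proof. I do not expect any genuine obstacle at this stage: the real difficulty---the layer-adapted decomposition and the term-by-term control of $\mr{I}$--$\mr{VI}$, together with the delicate interpolation estimates of Lemma \ref{Interpolation error} (in particular the exponents forced by the penalty choices \eqref{eq: penalization parameters})---has already been dealt with, and Theorem \ref{the:main result1} is the bookkeeping step that converts the $\xi$-inequality into a supercloseness statement and absorbs the harmless $I_{N}u-\Pi u$ correction.
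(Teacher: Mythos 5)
Your proposal is correct and follows essentially the same route as the paper: the bound on $\Vert\Pi u-u_h\Vert_{NIPG}$ is read off from the coercivity/Galerkin-orthogonality estimate for $\xi$ assembled at the end of Section 5, and the extra term $\Vert I_Nu-\Pi u\Vert_{NIPG}$ is reduced via two triangle inequalities to $\Vert I_Nu-P_hu\Vert_{NIPG,\overline{\Omega}_{11}}$ and controlled by \eqref{post-process-1} and \eqref{post-process-2}. Your explicit order-checking of \eqref{post-process-1} under Assumption \ref{ass:S-1} and $\sigma\ge k+\frac{3}{2}$ is a detail the paper leaves implicit, but the argument is the same.
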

\begin{proof}
First, by using the triangle inequality, there is
\begin{equation*}
\begin{aligned}
\Vert I_{N}u-u_{h}\Vert_{NIPG}\le \Vert I_{N}u-\Pi u\Vert_{NIPG}+\Vert \Pi u-u_{h}\Vert_{NIPG}.
\end{aligned}
\end{equation*}
Here $I_{N}u$ denotes vertices-edges-element interpolation on $\Omega$, $\Pi u$ is the interpolation defined in \eqref{eq:H-1}, while $u_{h}$ is the numerical solution of \eqref{eq:S-1}.
Note that $\Vert \Pi u-u_{h} \Vert_{NIPG}$ is known, so we just analyze the bound of $\Vert I_{N}u-\Pi u\Vert_{NIPG}$ in the following.

According to the definition of $\Pi u$, we just analyze the estimate of $\Vert I_{N}u-P_{h}u\Vert_{NIPG}$ outside the layer, where $P_{h}u$ represents the local $L^{2}$ projection. From the triangle inequality, one has
\begin{equation*}
\Vert I_{N}u-P_{h}u\Vert_{NIPG, \overline{\Omega}_{11}}^{2}\le C\Vert I_{N}u-u\Vert_{NIPG, \overline{\Omega}_{11}}^{2}+C\Vert u-P_{h}u\Vert_{NIPG, \overline{\Omega}_{11}}^{2}.
\end{equation*}
According to \eqref{post-process-1} and \eqref{post-process-2}, we obtain
\begin{equation*}
\Vert I_{N}u-P_{h}u\Vert_{NIPG, \overline{\Omega}_{11}}\le CN^{-(k+\frac{1}{2})}(\ln N)^{k+1}.
\end{equation*}
So far, the proof of this theorem has done.
\end{proof}

\section{Numerical experiment}
Below, we verify the previous theoretical conclusion by considering the following problem,
\begin{equation}\label{eq:KK-2}
\left\{
\begin{aligned}
 &-\varepsilon \Delta u+(3-x, 4-y)\nabla u+u=f,\quad x\in \Omega: = (0,1)^{2},\\
&u=0\quad\quad\text{on $\partial\Omega$},
\end{aligned}
\right.
\end{equation}
where $f(x)$ is chosen to satisfy
\begin{equation*}
u(x, y)=\sin(x)(1-e^{-2(1-x)/\varepsilon})\sin(2y)(1-e^{-3(1-y)/\varepsilon})
\end{equation*}
is the exact solution of the \eqref{eq:KK-2}.

In our numerical test, we consider $\varepsilon= 10^{-3}, 10^{-4},\cdots ,10^{-9}, k = 1, 2, 3$ and $N =8, 16, \cdots, 128$. 
Besides, for the above Shishkin mesh we take $\beta_{1}=2$, $\beta_{2}=3$, $\sigma = k + \frac{3}{2}$ and choose penalization parameters $\rho_{e}$ as \eqref{eq: penalization parameters}.

Then, we define the corresponding convergence rate by
$$p^{N}= \frac{\ln e^{N}-\ln e^{2N}}{\ln 2},$$
where
$e^{N}= \Vert I_{N}u-u_{h}\Vert_{NIPG}$ is the computation error with the mesh parameter $N$ for a particular $\varepsilon$.


\begin{table}[H]
\caption{$\Vert I_{N} u-u_h\Vert_{NIPG}$ in the case of $k=1$}
\footnotesize
\resizebox{110mm}{25mm}{
\setlength\tabcolsep{4pt}
\begin{tabular*}{\textwidth}{@{\extracolsep{\fill}} c| cccccccccccccc}
\cline{1-15}{}
            \diagbox{$N$}{$\varepsilon$} &\multicolumn{2}{c}{$10^{-3}$} &\multicolumn{2}{c}{$10^{-4}$}  &\multicolumn{2}{c}{$10^{-5}$}   
&\multicolumn{2}{c}{$10^{-6}$} &\multicolumn{2}{c}{$10^{-7}$}  &\multicolumn{2}{c}{$10^{-8}$}&\multicolumn{2}{c}{$10^{-9}$}\\

\cline{1-15}&$e^N$&$p^N$&$e^N$&$p^N$&$e^N$&$p^N$&$e^N$&$p^N$&$e^N$&$p^N$&$e^N$&$p^N$&$e^N$&$p^N$\\
\cline{1-15}

             $8$       &0.220E+0  &1.12 &0.219E+0  &1.13 &0.219E+0  &1.13 &0.219E+0  &1.13  &0.219E+0  &1.13  &0.219E+0  &1.13 &0.219E+0 &1.13\\
             $16$       &0.101E+0  &1.30  &0.999E-1  &1.33  &0.997E-1  &1.33 &0.997E-1  &1.33  &0.997E-1  &1.33  &0.997E-1  &1.33  &0.997E-1 &1.33\\
             $32$       & 0.410E-1  &1.42  & 0.397E-1  &1.46 & 0.396E-1  &1.47 & 0.396E-1  &1.47 & 0.396E-1  &1.47 & 0.396E-1  &1.47  &0.396E-1  &1.47\\
             $64$       & 0.154E-1  &1.47  & 0.144E-1  &1.55 &  0.143E-1  &1.56  & 0.143E-1  &1.56  &0.143E-1  &1.56  &0.143E-1  &1.56 &0.143E-1  &1.56\\
             $128$      &0.555E-2  &1.48 &0.493E-2  &1.60 &0.486E-2  &1.62  &0.485E-2  &1.62 &0.485E-2  &1.62 &0.485E-2  &1.62 &0.485E-2  &1.61\\
            $256$     &0.200E-2 &-- &0.163E-2 &-- &0.158E-2 &-- &0.158E-2 &-- &0.158E-2 &-- &0.158E-2 &-- &0.158E-2 &--\\
\cline{1-15}
\end{tabular*}}
\label{table:1}
\end{table}

\begin{table}[H]
\caption{$\Vert I_{N} u-u_h\Vert_{NIPG}$ in the case of $k=2$}
\footnotesize
\resizebox{110mm}{25mm}{
\setlength\tabcolsep{4pt}
\begin{tabular*}{\textwidth}{@{\extracolsep{\fill}} c| cccccccccccccc}
\cline{1-15}{}
            \diagbox{$N$}{$\varepsilon$}   &\multicolumn{2}{c}{$10^{-3}$} &\multicolumn{2}{c}{$10^{-4}$}  &\multicolumn{2}{c}{$10^{-5}$}   
&\multicolumn{2}{c}{$10^{-6}$} &\multicolumn{2}{c}{$10^{-7}$}  &\multicolumn{2}{c}{$10^{-8}$} &\multicolumn{2}{c}{$10^{-9}$}\\

\cline{1-15}&$e^N$&$p^N$&$e^N$&$p^N$&$e^N$&$p^N$&$e^N$&$p^N$&$e^N$&$p^N$&$e^N$&$p^N$&$e^N$&$p^N$\\
\cline{1-15}

             $8$       &0.752E-1  &1.47 &0.746E-1  &1.49 &0.745E-1  &1.49 &0.745E-1  &1.49 &0.745E-1  &1.49 &0.745E-1  &1.49 &0.745E-1  &1.49\\
             $16$       &0.271E-1  &1.81  & 0.265E-1  &1.85  &0.265E-1  &1.86 &0.265E-1  &1.86  &0.265E-1  &1.86  &0.265E-1  &1.86  &0.265E-1  &1.86\\
             $32$       &0.775E-2  &2.00&0.735E-2  &2.11 &0.730E-2  &2.12  & 0.730E-2  &2.12 &0.730E-2  &2.12  &0.730E-2  &2.12  &0.730E-2  &2.12\\
             $64$       &0.193E-2  &2.03  &0.171E-2  &2.25  &0.168E-2  &2.28  & 0.168E-2  &2.28  &0.168E-2  &2.29 &0.168E-2  &2.29 &0.168E-2  &2.28\\
             $128$      &0.473E-3  &1.94 &0.360E-3  &2.28  &0.346E-3  &2.37  &0.345E-3  &2.39 &0.345E-3  &2.39 &0.345E-3  &2.38 &0.345E-3  &1.85\\
             $256$     &0.124E-3 &-- &0.739E-4 &-- &0.667E-4 &-- &0.660E-4 &-- &0.659E-4 &-- &0.662E-4&--&0.961E-4 &--\\

\cline{1-15}
\end{tabular*}}
\label{table:2}
\end{table}

\begin{table}[H]
\caption{$\Vert I_{N} u-u_h\Vert_{NIPG}$ in the case of $k=3$}
\footnotesize
\resizebox{110mm}{25mm}{
\setlength\tabcolsep{4pt}
\begin{tabular*}{\textwidth}{@{\extracolsep{\fill}} c| cccccccccccccc}
\cline{1-15}{}
            \diagbox{$N$}{$\varepsilon$}   &\multicolumn{2}{c}{$10^{-3}$} &\multicolumn{2}{c}{$10^{-4}$}  &\multicolumn{2}{c}{$10^{-5}$}   
&\multicolumn{2}{c}{$10^{-6}$} &\multicolumn{2}{c}{$10^{-7}$}  &\multicolumn{2}{c}{$10^{-8}$} &\multicolumn{2}{c}{$10^{-9}$}\\

\cline{1-15}&$e^N$&$p^N$&$e^N$&$p^N$&$e^N$&$p^N$&$e^N$&$p^N$&$e^N$&$p^N$&$e^N$&$p^N$&$e^N$&$p^N$\\
\cline{1-15}

             $8$       &0.200E-1  &2.01 &0.197E-1  &2.04 &0.197E-1  &2.04 &0.197E-1  &2.04 & 0.197E-1  &2.04  &0.197E-1  &2.04 &0.197E-1 &2.04\\
             $16$       &0.498E-2 & 2.38 &0.481E-2 &2.42  &0.479E-2 &2.42 &0.479E-2 &2.42  &0.479E-2 &2.42  &0.479E-2  &2.42 &0.479E-2 & 2.42\\
             $32$       &0.959E-3  &2.54 &0.900E-3  &2.57  &0.894E-3  &2.58 &0.893E-3  & 2.58  &0.893E-3  &2.58 &0.893E-3  &2.58  &0.893E-3 &2.59\\
             $64$       & 0.165E-3  &2.57 &0.151E-3  &2.57  &0.150E-3  &2.57 &  0.149E-3  &2.57  &0.149E-3  &2.57 & 0.149E-3  &2.55 & 0.148E-3 &1.88\\
             $128$      &0.279E-4  &-- &0.255E-4  &--  &0.252E-4  &-- &0.252E-4  &-- &0.251E-4  &-- &0.254E-4  &-- &0.403E-4 &--\\

\cline{1-15}
\end{tabular*}}
\label{table:3}
\end{table}

These tables above show the sharpness of Theorem \ref{the:main result1}. Furthermore, from Table \ref{table:2} and Table \ref{table:3}, we observe that when the parameter $\varepsilon\rightarrow 0$ or the degree of polynomials $k$ and the interval number $N$ become larger, convergence degradation may occur. One possible reason is that with the change of the above conditions, the condition number of the matrix may rapidly increase, which makes solving this linear system difficult. We will focus on this issue in the near future.

\section{Acknowledgements}
\subsection{Funding}
Our research is supported by National Natural Science Foundation of China (11771257), Shandong Provincial Natural Science Foundation, China (ZR2021MA004).
\subsection{Data availability statement}
The authors confirm that the data supporting the findings of this study are available within the article and its supplementary materials.
\subsection{Conflict of interests}
The authors declare that they have no conflict of interest.

\end{document}